\documentclass[11pt,twoside,a4paper]{amsart}

\usepackage{amssymb}
\usepackage{vmargin}
\usepackage{amscd}
\usepackage{stmaryrd}
\usepackage{mathrsfs}
\usepackage[all]{xy}
\usepackage{enumitem}
\usepackage{xr-hyper}
\usepackage{hyperref}
\hypersetup{colorlinks,
allcolors=black}


\setmargins{32mm}{20mm}{14.6cm}{22cm}{1cm}{1cm}{1cm}{1cm}

 \setlength{\unitlength}{0.8cm}

\setcounter{tocdepth}{3}

\newcommand\la{\leftarrow}

\newcommand\id{\mathrm{id}}

\newcommand\ten{\otimes}
\newcommand\hten{\hat{\otimes}}

\renewcommand\H{\mathrm{H}}
\newcommand\z{\mathrm{Z}}
\renewcommand\b{\mathrm{B}}

\newcommand\Z{\mathbb{Z}}
\newcommand\Q{\mathbb{Q}}

\newcommand\bS{\mathbb{S}}

\newcommand\C{\mathcal{C}}

\newcommand\cD{\mathcal{D}}

\newcommand\cK{\mathcal{K}}

\newcommand\cP{\mathcal{P}}
\newcommand\cQ{\mathcal{Q}}

\newcommand\cU{\mathcal{U}}
\newcommand\cV{\mathcal{V}}
\newcommand\cW{\mathcal{W}}

\newcommand\Def{\mathfrak{Def}}
\newcommand\Del{\mathfrak{Del}}

\newcommand\DDel{\uline{\Del}}

\newcommand\m{\mathfrak{m}}

\newcommand\Alg{\mathrm{Alg}}

\newcommand\Hom{\mathrm{Hom}}

\newcommand\map{\mathrm{map}}

\newcommand\ctr{\mathrm{ctr}}

\newcommand\HHom{\underline{\mathrm{Hom}}}
\newcommand\EEnd{\underline{\mathrm{End}}}
\newcommand\DDer{\underline{\mathrm{Der}}}

\newcommand\cone{\mathrm{cone}}
\newcommand\cocone{\mathrm{cocone}}

\DeclareMathOperator\Ob{\mathrm{Ob}}

\newcommand\Co{\mathrm{Co}}

\newcommand\Art{\mathrm{Art}}

\newcommand\Set{\mathrm{Set}}

\renewcommand\>{\rangle}
\newcommand\Lim{\varprojlim}
\newcommand\LLim{\varinjlim}

\newcommand\into{\hookrightarrow}

\newcommand\xra{\xrightarrow}

\newcommand\rk{\mathrm{rk}}
\newcommand\pr{\mathrm{pr}}

\newcommand\proj{\mathrm{proj}}

\newcommand\bt{\bullet}
\newcommand\by{\times}

\DeclareMathOperator\mc{\mathrm{MC}}
\DeclareMathOperator\mmc{\underline{\mathrm{MC}}}
\DeclareMathOperator\MC{\mathfrak{MC}}

\DeclareMathOperator\Gg{\mathrm{Gg}}

\DeclareMathOperator\ddef{\mathrm{Def}}

\newcommand\diag{\mathrm{diag}\,}

\newcommand\pro{\mathrm{pro}}

\newcommand\half{\frac{1}{2}}



\newcommand\Gpd{\mathrm{Gpd}}

\newcommand\co{\colon\thinspace}

\newcommand\oR{\mathbf{R}}

\newcommand\oL{\mathbf{L}}

\newcommand\oI{\mathbf{1}}

\newcommand\uleft\underleftarrow
\newcommand\uline\underline
\newcommand\uright\underrightarrow

\newtheorem{theorem}{Theorem}[section]
\newtheorem{proposition}[theorem]{Proposition}
\newtheorem{corollary}[theorem]{Corollary}

\newtheorem{lemma}[theorem]{Lemma}
\newtheorem*{theorem*}{Theorem}
\newtheorem*{proposition*}{Proposition}
\newtheorem*{corollary*}{Corollary}
\newtheorem*{lemma*}{Lemma}
\newtheorem*{conjecture*}{Conjecture}

\theoremstyle{definition}
\newtheorem{definition}[theorem]{Definition}
\newtheorem*{definition*}{Definition}

\newtheorem*{notation*}{Notation}

\theoremstyle{remark}

\newtheorem{remark}[theorem]{Remark}
\newtheorem{remarks}[theorem]{Remarks}

\newtheorem*{example*}{Example}
\newtheorem*{examples*}{Examples}
\newtheorem*{remark*}{Remark}
\newtheorem*{remarks*}{Remarks}
\newtheorem*{exercise*}{Exercise}
\newtheorem*{property*}{Property}
\newtheorem*{properties*}{Properties}

\externaldocument[ddt1-]{ddt1hyperref}
\externaldocument[utrecht-]{utrechtproceedings2024_amsart}
\externaldocument[dmsch-]{dmsch}
\externaldocument[DQpoisson-]{DQpoisson}
\sloppy
\begin{document}
 
\begin{abstract}
We show that 
Hinich's simplicial nerve of  the differential graded Lie algebra  (DGLA) of derived derivations of a dg algebra $A$ over a dg properad $\cP$ is equivalent to the space of deformations of $A$ as a  $\cP_{\infty}$-algebra in Positselski's contraderived dg category. This  resolves Hinich's counterexamples to the general existence of derived deformations. It also generalises his results when $A$ is homologically bounded below, since  contraderived deformations are then precisely derived deformations. 

\end{abstract}

\title[Derived derivations govern contraderived deformations]{Derived derivations govern contraderived deformations of dg algebras over dg (pr)operads}

\author{J. P. Pridham}

\maketitle

\section*{Introduction}

In \cite{hinichDefsHtpyAlg}, Hinich proved that derived  deformations of any connective dg algebra $A$  over  a connective operad $\cP$ in characteristic $0$ are governed by its DGLA $\oR\DDer_{\cP}(A,A)$ of derived derivations, in the sense that the $\infty$-groupoid of derived deformations is given by Hinich's simplicial nerve of the DGLA. He also gave a simple counterexample showing that  derived  deformations of a non-connective dg algebra cannot be governed by any DGLA in general. 

This raises the question of finding what the DGLA of derived derivations does govern. As is typical for problems involving Koszul duality, the solution is given by Positselski's derived categories of the second kind \cite{positselskiDerivedCategories}, in this case by the contraderived category.
Given a commutative dg Artinian\footnote{Here, it is essential that $R$  be Artinian on the nose, not just up to quasi-isomorphism.}  algebra $R$, the contraderived dg category $\cD_{dg}^{\ctr}(R)$ of $R$ has as objects all $R$-modules $M$ in chain complexes which are quasi-free, i.e.\ free as graded $R$-modules; in particular, if $M$ is not bounded below in the chain direction, this means it need not be cofibrant in the projective model structure. 

We have two equivalent formulations of the resulting deformation problem. The first associates to a commutative dg local Artinian algebra $R$ the space of pairs $(A',\theta)$ where $A'$ is a $\cP$-algebra in  $\cD_{dg}^{\ctr}(R)$ and $\theta \co A'\ten_R k \to A$ a quasi-isomorphism, so $A \simeq A'\ten^{\oL,I\!I}_R k$ (the derived tensor product of the second kind). 
The second formulation instead takes pairs $(A'',\theta)$ where $A''$ is a strong homotopy $\cP$-algebra in $\cD_{dg}^{\ctr}(R)$ and $\theta \co A'\ten_R k \to A$ a quasi-isomorphism. 

Taking the first approach, our main result is Theorem \ref{cofalgdefthm}, which shows that if $A$ is a cofibrant $\cP$-algebra, then the functor sending $R$ to the  $\infty$-groupoid of contraderived deformations $(A',\theta)$ as above is governed by the DGLA $\DDer_{\cP}(A,A)$ of $\cP$-algebra deformations of $A$. This statement requires no connectivity hypotheses on $\cP$, $A$ or $R$, but if $A$ is connective and $R$ eventually connective, then this is also the  $\infty$-groupoid of derived deformations (Corollary \ref{cofalgdefcor}). These results follow directly from an analysis of Maurer--Cartan elements. Corollary \ref{cofalgdefcor} generalises Hinich's results from \cite{hinichDefsHtpyAlg} by weakening the connectivity hypotheses, while Theorem \ref{cofalgdefthm} shows that his counterexamples can be resolved by using contraderived deformations in place of derived deformations.

Taking the second approach, we interpret the space of strong homotopy $\cP$-algebras in $\cD_{dg}^{\ctr}(R)$ as being the space of maps from $\cP$ to $\cD_{dg}^{\ctr,\ten}(R)$ in the $\infty$-category of coloured dg operads (a.k.a. dg multicategories) localised at quasi-equivalences. Corollary \ref{maincor} then shows that space of such  contraderived deformations is governed by a DGLA of derived deformations defined by operadic convolution, and extends the result to algebras over a dg properad (a setting where the first approach is not possible). That corollary also shows that if $A$ is connective and $R$ eventually connective, this agrees with the space of derived deformations, defined using the derived category $\cD_{dg}(R)$ in place of the contraderived category $\cD_{dg}^{\ctr}(R)$.

Corollary \ref{maincor} is a special case of Theorem \ref{mainthm}, which shows that for cofibrant $\cP$,  a DGLA $ \DDer(\widehat{\cP},\cQ)$ governs the space of maps $\cP \to \cQ^{\ctr}(R)$ for a  construction $\cQ \leadsto \cQ^{\ctr}(R)$, defined for any dg properad $\cQ$. 
The $(-)^{\ctr}$ construction  generalises the formation of $\cD^{\ctr}_{dg}(R)$ from $\cD_{dg}(k)$ and $R$,
and is fundamental in the sense that it turns out to be the universal functor under $\cQ\hten-$ with good deformation theory (Remarks \ref{proximateDdgrmk}).
%
%
It
is roughly right adjoint to Chuang and Lazarev's hat construction $\widehat{(-)}$ \cite{ChuangLazarevComb}, 
which
amounts to formally adding a new co-unit to the Koszul dual co(pr)operad. 

When $\cP$ is cofibrant, the  convolution DGLA $\DDer(\widehat{\cP},\cQ)$ is thus the augmented deformation complex as considered in \cite{MerkulovValletteDefThRepsProps}, even though $\widehat{\cP}$ itself is acyclic.  When $\cQ=\cD^{\ten}_{dg}(k)$, this augmented deformation complex  is a model for the DGLA  $\oR\DDer_{\cP}(A,A)$  of derived deformations of a $\cP$-algebra $A$, whereas the unaugmented deformation complex $\DDer(\cP,\cQ)$ often studied in the operadic literature only generalises the cocone of $\oR\DDer_{\cP}(A,A)\to \oR\HHom_k(A,A)$ (Remark \ref{compDGLArmk}).

\smallskip
I would like to thank Andrey Lazarev for helpful comments.

\subsubsection*{Relation with other work}

As described above, \cite{hinichDefsHtpyAlg} addresses the case of a  connective dg algebra over a connective dg operad, and gives a simple counterexample \cite[Example 4.3]{hinichDefsHtpyAlg} to the possibility of derived deformations of algebras being governed by DGLAs in general.

There is a significant literature working with mapping spaces of dg operads rather than deformations of cofibrant models, going back to \cite[\S 4]{rezkThesis} and allowing a generalisation to properads, but they  work with monochrome dg (pr)operads, implicitly studying deformations which fix the underlying chain complex. In particular, \cite[Theorem 12.2.4]{LodayValletteOperads}
shows that $\ker(\DDer_{\cP_{\infty}}(A,A)\to \HHom_k(A,A))$ gives a DGLA governing such deformations. However,  that DGLA seldom has finite-dimensional cohomology groups for examples of interest, and in applications one usually wishes to allow the  complex underlying a dg algebra to deform non-trivially.

Allowing the chain complex to vary in particular leads to quasi-isomorphism rather than $\infty$-isotopy 
as the notion of equivalence. That perspective is taken up in \cite{GinotYalin}, but they exclusively study derived automorphisms of the trivial deformation without investigating the possible ways the algebraic structure can deform. A universal property follows for the functor governed by the DGLA of derived derivations, without determining whether or how its elements correspond to derived deformations. It is a consequence of our results that the universal map in question must be that from derived deformations to contraderived deformations, and that the map is an equivalence for eventually connective dg algebras (Remarks \ref{1proxrmk1} and \ref{proximateDdgrmk}).

\tableofcontents

\subsubsection*{Notation}

For a chain (resp. cochain) complex $M$, we write $M_{[i]}$ (resp. $M^{[j]}$) for the complex $(M_{[i]})_m= M_{i+m}$ (resp. $(M^{[j]})^m = M^{j+m}$).  When we  need to compare chain and cochain complexes, we silently  make use of the equivalence  $u$ from chain complexes to cochain complexes given by $(uV)^i := V_{-i}$, and refer to this as rewriting the chain complex as a cochain complex (or vice versa). On suspensions, this has the effect that $u(V_{[n]}) = (uV)^{[-n]}$. We also occasionally write $M[i]:=M^{[i]} =M_{[-i]}$ when there is only one grading. 

We use the subscripts and superscripts $\bt$ to emphasise that chain and cochain complexes incorporate differentials, with $\#$ used instead when we are working  with the underlying graded objects.

Given $A$-modules $M,N$ in chain complexes, we write $\HHom_A(M,N)$ for the cochain complex given by
\[
 \HHom_A(M,N)^i= \Hom_{A_{\#}}(M_{\#},N_{\#[-i]}),
\]
with differential $d f= d_N \circ f \mp f \circ d_M$,
where $V_{\#}$ denotes the graded vector space underlying a chain complex $V$, and $\mp$ the Koszul sign.

Given chain complexes $U$ and $V$ over $k$, we write $U\ten_k V$ for the chain complex given by the (direct sum) total complex of the external tensor product, so  $(U\ten_k V)_n= \bigoplus_{i+j=n} U_i\ten V_j$ with differential $d_U\ten \id \mp \id\ten d_V$.  Given a pro-object $V= \{V(\alpha)\}_{\alpha}$ in chain complexes, we write $U\hten V$ for the completed tensor product $\Lim_{\alpha}(U\ten V(\alpha))$.

We write $s\Set$ for the category of simplicial sets, and $\oR\map$ for derived mapping spaces, i.e.\ the right-derived functor of  $\Hom$ regarded as a simplicial set-valued bifunctor. 

\section{Background results} 


Fix a field $k$ of characteristic $0$.

\subsection{Derived deformation functors}

\subsubsection{Artinian cdgas and DGLAs} 

\begin{definition}\label{cdgadef}
A cdga (commutative differential graded algebra) $R_{\bt}$ over $k$ is a chain complex of $k$-vector spaces equipped with a   unital associative graded-commutative  multiplication with respect to which the differential acts as a derivation. Say that $R_{\bt}$ is local Artinian if it admits a $k$-cdga homomorphism $R_{\bt} \to k$ for which the kernel $\m(R_{\bt})$ is nilpotent and finite-dimensional. 

Write $dg\Art_k$ for the category of local Artinian $k$-cdgas, $dg_+\Art_k \subset dg\Art$ for the full subcategory of non-negatively graded objects  $\ldots \to R_2 \to R_1 \to R_0$, and $\Art_k \subset dg_+\Art_k$ for the full subcategory on objects concentrated in degree $0$.
\end{definition}

\begin{definition}
 A differential graded Lie algebra (DGLA) $L^{\bt}$ over $k$ is a cochain complex  of $k$-vector spaces equipped with a graded-Lie bracket with respect to which the differential acts as a derivation.
\end{definition}
%
%

\subsubsection{Extended functors associated to DGLAs}

\begin{definition}\label{MCdef}
 Given a DGLA $L$, the Maurer--Cartan set $\mc(L)$ is defined by 
 \[
  \mc(L):=\{\omega \in L^1 ~:~ d\omega +\half [\omega,\omega]=0\}.
 \]

 If $L^0$ is nilpotent, define the gauge group $\Gg(L)$ to consist of grouplike elements in the completed universal enveloping algebra $\widehat{\cU}(L^0)$ (a complete Hopf algebra). The exponential map gives an isomorphism to $\Gg(L)$ from the set $L^0$ equipped with the Campbell--Baker--Hausdorff product.
 
 There is a gauge action of $\Gg(L)$ on $\mc(L)$, given by $g\star \omega:= g\omega g^{-1} -(dg)g^{-1}$ (evaluated in $\widehat{\cU}(L)^1$); see \cite[\S 1]{Man} or \cite[Lecture 3]{Kon}.

 Denote the quotient set $\mc(L)/\Gg(L)$ by $\ddef(L)$, and the quotient groupoid $[\mc(L)/\Gg(L)]$ by $\Del(L)$ (the Deligne groupoid).
 \end{definition}
The terminology has its origins in constructions associated to the DGLA of differential forms valued in an adjoint bundle, where the Maurer--Cartan equation parametrises flat connections and the gauge action corresponds to gauge transformations. 
 
 \subsubsection{Hinich's simplicial nerve and variants}
 
 The following, when restricted to $dg_+\Art$, is Hinich's nerve $\Sigma_L$ from \cite[Definition 8.1.1]{hinstack}.
 \begin{definition}\label{mmcdef}
  Given a DGLA $L$, define the simplicial set-valued functor $\mmc(L,-)\co \pro(dg\Art) \to s\Set$ by 
  \[
   \mmc(L,R)_n:= \mc((L\ten \Omega^{\bt}(\Delta^n))\hten \m(R)), 
  \]
  with the obvious simplicial structure maps, where $\Omega^{\bt}(\Delta^n)$ is the cdga $\Q[t_0, \ldots, t_n, dt_0, \ldots, dt_n]/(\sum t_i-1,\, \sum dt_i)$ of de Rham polynomial forms on the $n$-simplex, with $t_i$ of degree $0$. 
  
  We write $\mc(L,R)$ for  $\mmc(L,R)_0$.
 \end{definition}

 We also have the following variants:
\newcommand\DDEL{\uline{\mathfrak{DEL}}}
\begin{definition}\label{ddeldef}
  Given a DGLA $L$, define the simplicial groupoid-valued functor $\DDEL(L,-)\co \pro(dg\Art) \to \Gpd^{\Delta}$ by 
  \[
   \DDEL(L,R)_n:= \Del(L\ten \Omega^{\bt}(\Delta^n),R),
  \]
 and the functor $\DDel(L,-)\co dg\Art \to s\Gpd$, taking values in simplicially enriched groupoids, by letting $\DDel(L,R)$ have objects $\mc(L,R)$ and simplicial sets $ \DDel(L,R)(\omega,\omega')$ of morphisms given by
 \[
  \DDel(L,R)(\omega,\omega')_n:=\{g \in \Gg((L\ten \Omega^{\bt}(\Delta^n))\hten \m(R))~:~ g \star \omega = \omega' \in  \mmc(L,R)_n\}.
 \]
 \end{definition}

 Given a simplicial groupoid $\Gamma$, we can apply the nerve construction $B$ to give a bisimplicial set $B\Gamma$, then take the diagonal to give a simplicial set $\diag B \Gamma$. 
 The following lemma then gives us natural alternative interpretations of the simplicial nerve.
 
\begin{lemma}\label{MCDelequivlemma}\cite[Lemma \ref{utrecht-MCDelequivlemma}]{utrecht}
There are natural weak equivalences
 \[
  \mmc(L,R) \to \diag B \DDEL(L,R) \la \diag B\DDel(L,R)
 \]
for all DGLAs $L$ and local Artinian cdgas $R$. 
\end{lemma}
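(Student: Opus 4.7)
The plan is to handle the two comparison maps separately, in both cases reducing to a Dupont-style gauge normalization together with a bisimplicial comparison. I view both of the source objects as bisimplicial sets: $\mmc(L,R)$ will be regarded as constant in the nerve direction, and $B\DDEL(L,R)$, $B\DDel(L,R)$ are already bisimplicial with one direction coming from the nerve of (simplicial) groupoids and the other from the external simplex direction (either from $\Omega^\bt(\Delta^m)$ or from the simplicial enrichment of morphism spaces).

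For the right-hand map $\diag B\DDel(L,R) \to \diag B\DDEL(L,R)$, I would write it as the diagonal of a natural map of bisimplicial sets and analyze it one external simplicial degree at a time. At each $m$, it is the nerve of a functor
\[
\DDel(L,R)[m] \hookrightarrow \DDEL(L,R)_m = \Del(L\ten\Omega^\bt(\Delta^m),R),
\]
where $\DDel(L,R)[m]$ denotes the groupoid obtained from $\DDel(L,R)$ by taking morphism sets at simplicial level $m$, and an object $\omega\in\mc(L,R)$ is sent to its degenerate image $s_0^m\omega\in\mmc(L,R)_m$. This functor is fully faithful by construction, since morphism sets on both sides consist of the same elements of $\Gg((L\ten\Omega^\bt(\Delta^m))\hten\m(R))$. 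Essential surjectivity reduces to the assertion that every MC element in $(L\ten\Omega^\bt(\Delta^m))\hten\m(R)$ is gauge-equivalent to the degeneracy of a constant MC element in $L\hten\m(R)$; this is a classical Dupont-type integration lemma, proved by solving the associated gauge ODE inductively along the filtration of $L\hten\m(R)$ by powers of $\m(R)$, with nilpotence of $\m(R)$ forcing the solution to terminate at finite stage. The functor is thus an equivalence of groupoids for each $m$, so the map of nerves is a weak equivalence rowwise, and the Eilenberg-Zilber theorem lifts this to an equivalence of diagonals.

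For the left-hand map $\mmc(L,R) \to \diag B\DDEL(L,R)$, the rowwise strategy fails because for fixed $m$ the groupoid $\Del(L\ten\Omega^\bt(\Delta^m),R)$ is not discrete. Instead I would identify $\diag B\DDEL(L,R)$ with the Dwyer-Kan classifying space $W\DDEL(L,R)$ of the simplicial groupoid $\DDEL(L,R)$ via the standard natural equivalence $\diag B\Gamma\simeq W\Gamma$, and then construct an explicit natural weak equivalence $\mmc(L,R)\to W\DDEL(L,R)$. An $n$-simplex $\eta\in\mmc(L,R)_n$ would be sent to the $n$-simplex of $W\DDEL(L,R)$ whose vertex data are the MC elements obtained by restricting $\eta$ to the vertices of $\Delta^n$, with the higher cells given by the chain of gauge transformations between successive restrictions produced by the Dupont contraction of $\Omega^\bt(\Delta^n)$ onto its constants. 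One then verifies that this assembly is compatible with faces and degeneracies, and checks that the resulting map induces isomorphisms on all homotopy groups, using the identification $\pi_i\mmc(L,R)\cong H^{1-i}(L\hten\m(R))$ and a parallel computation for $W\DDEL(L,R)$.

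The main technical obstacle in both steps is the Dupont normalization: constructing gauge transformations in $\Gg((L\ten\Omega^\bt(\Delta^m))\hten\m(R))$ bringing arbitrary MC elements to canonical constant form, coherently enough to assemble into a simplicial map. The inductive ODE works at each filtration stage because $\m(R)$ is nilpotent, but care is needed to check that the normalizations respect the face and degeneracy structure in the external simplex direction, so that the comparisons are natural simplicial maps rather than just levelwise equivalences.
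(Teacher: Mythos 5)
Note first that the paper does not prove this lemma itself but imports it from the cited reference, so I am judging your argument on its own terms.

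Your treatment of the right-hand map is correct and standard: for each fixed $m$ the functor from the groupoid of level-$m$ morphisms of $\DDel(L,R)$ to $\Del(L\ten \Omega^{\bt}(\Delta^m),R)$ is fully faithful by inspection of the definitions, essential surjectivity is the Goldman--Millson/Hinich statement that every Maurer--Cartan element of $(L\ten \Omega^{\bt}(\Delta^m))\hten \m(R)$ is gauge-equivalent to a constant one (proved by the obstruction/ODE induction along powers of the nilpotent ideal $\m(R)$), and a levelwise weak equivalence of bisimplicial sets induces a weak equivalence on diagonals (the realization lemma, rather than Eilenberg--Zilber, but the invocation is fine).

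The left-hand comparison is where your proposal goes astray, and the gap is caused by a false premise: you assert that ``the rowwise strategy fails'' because for fixed $m$ the groupoid $\Del(L\ten\Omega^{\bt}(\Delta^m),R)$ is not discrete, but that only rules out slicing in the direction you chose. Slice the bisimplicial set $B\DDEL(L,R)$ the other way, fixing the nerve degree $m$ and letting the external simplex degree $n$ vary: the $m$-simplices of the nerve of the action groupoid $[\mc/\Gg]$ form the product $\mc \by \Gg^{m}$, so the row in question is $\mmc(L,R)\by G^{m}$ with $G_n=\Gg((L\ten\Omega^{\bt}(\Delta^n))\hten\m(R))$, and the map from $\mmc(L,R)$ is $\eta\mapsto(\eta,1,\dots,1)$. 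Via the exponential, $G$ is isomorphic as a simplicial set to the simplicial vector space $n\mapsto ((L\ten\Omega^{\bt}(\Delta^n))\hten\m(R))^0$, which is contractible because each $n\mapsto \Omega^{j}(\Delta^n)$ is a contractible simplicial vector space (a consequence of extendability of polynomial forms, as in Bousfield--Gugenheim). So the left-hand map is also a rowwise weak equivalence and the same realization lemma finishes the proof. By contrast, the route you propose instead has three genuine defects: the map you build (vertex restrictions joined by gauge transformations) is not the natural map of the statement, which sends $\eta$ to the totally degenerate nerve simplex on the object $\eta$, so even if your map were an equivalence you would still owe a homotopy between the two; the coherence of the Dupont normalization that you defer is a real obstruction, since Dupont's contraction is compatible with face maps but not with degeneracies, so the gauge data do not automatically assemble into a simplicial map to the classifying space; and the final step, ``a parallel computation for $W\DDEL(L,R)$,'' is essentially the content of the lemma and is not supplied. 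I would also flag that the identification $\pi_i\,\mmc(L,R)\cong \H^{1-i}(L\hten\m(R))$ needs the differential twisted by the basepoint and only takes this clean form for $i\ge 2$.
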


\subsection{Derived and contraderived dg categories}

\begin{definition}
Given $R \in dg\Art_k$, define the derived dg category $\cD_{dg}(R)$ to consist of all cofibrant  objects in the projective model structure on  $R$-modules in complexes (e.g. \cite[Theorem 8.1a]{positselskiDerivedCategories}), with morphisms given by the usual cochain  complexes $\HHom_R(M,N)$ of graded $R$-module homomorphisms. 
\end{definition}
Since $R$ is Artinian, all projective modules are free, so this means that objects $M$ of $\cD_{dg}(R)$ are free as graded $R$-modules, with the additional condition that $M$ must admit an expression as a filtered colimit $\LLim_{i \in I}M(i)$ of subcomplexes indexed by some ordinal $I$, with $d(M(i+1)) \subset M(i)$. 

Since all objects are fibrant in the projective model structure, the category $\H^0\cD_{dg}(R)$ (with the same objects, but morphisms $\H^0\HHom_R(M,N)$) is then just the homotopy category for the model structure, i.e.\ the derived category $\cD(R)$.  

\begin{definition}
  Given $R \in dg\Art_k$, define the contraderived dg category $\cD^{\ctr}_{dg}(R)$ to have as objects $R$-modules $M$ in chain complexes for which the underlying graded $R_{\#}$-module $M_{\#}$ is free (equivalently, projective). Morphisms are given by the usual cochain complexes $\HHom_R(M,N)$ of graded $R$-module homomorphisms.
   
 More generally, given   $R = \{R(i)\}_i \in \pro(dg\Art_k)$, define $\cD^{\ctr}_{dg}(R)$ to have as objects $R$-modules $M$ for which the underlying graded $R_{\#}$-module $M_{\#}$ takes  the form $R_{\#}\hten V_{\#}:= \Lim_i( R(i)_{\#}\ten V_{\#})$ for some graded $k$-vector space $V_{\#}$, with morphisms again given by $\HHom_R(M,N)$
 \end{definition}
In the notation of \cite{positselskiDerivedCategories}, $\cD^{\ctr}_{dg}(R)$ is denoted $\mathrm{DG}(R-\mathrm{mod}_{\proj})$ for $R \in dg\Art_k$, and  $\mathrm{DG}(R^{\vee}-\mathrm{contra}_{\proj})$ for $R  \in \pro(dg\Art_k)$, where $R= \{R(i)\}_i$ and $R^{\vee}$ is  the dual dg coalgebra $\LLim_i \HHom_k(R(i),k)$.  

By   \cite[Theorems 3.8 and 4.4d]{positselskiDerivedCategories}, the homotopy category $\H^0\cD^{\ctr}_{dg}(R)$ is equivalent to the contraderived category $\cD^{\ctr}(R)$ of $R$ when $R$ is Artinian  and to the  contraderived category $\cD^{\ctr}(R^{\vee})$ of the dg coalgebra $R^{\vee}$ when $R$ is pro-Artinian. The contraderived category is given by localising the category of $R$-modules in complexes at maps whose cones are contra-acyclic, meaning that they lie in the minimal triangulated subcategory containing total modules of exact triples and closed under infinite products. 

The objects of $\cD^{\ctr}_{dg}(R)$ correspond to the cofibrant objects in the model category ``of the second kind'' from \cite[Theorem 8.2b]{positselskiDerivedCategories}, in which fibrations are surjections. Since this model structure has more cofibrations and fewer weak equivalences than the standard projective model structure (``of the first kind''), but the same fibrations, the identity functor from the model structure of the second kind to that of the first kind is right Quillen. 

The inclusion $\cD_{dg}(R) \into \cD^{\ctr}_{dg}(R)$ corresponds to its left adjoint restricted to cofibrant objects. It thus has a  derived right adjoint, which is given by cofibrant replacement in the model structure of the first kind (a bar construction), so $\cD_{dg}(R)$ is a right admissible dg subcategory of  $\cD^{\ctr}_{dg}(R)$.

 \begin{lemma}\label{connectivitylemma}
 For $R \in dg_+\Art$, if $M \in \cD^{\ctr}_{dg}(R)$ has $\H_i(M\ten_Rk)=0$ for all $i \ll 0$, then $M \in \cD_{dg}(R)$. 
 \end{lemma}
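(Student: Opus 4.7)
My plan is to construct the filtration of $M$ by subcomplexes $M(\alpha)$ indexed by an ordinal with $d(M(\alpha+1))\subset M(\alpha)$ required by the definition of $\cD_{dg}(R)$. The construction combines the finite $\m$-adic filtration on $M$ with a splitting of $V:=M\otimes_R k$ over the field $k$, treating the bounded-below case first and then reducing the general case to it.

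\textbf{Bounded-below case.} Suppose $V_i=0$ for $i<n_0$, and fix $M_\#\cong R_\#\otimes V_\#$. Since $R$ is finite-dimensional with $d_R$ consequently a nilpotent operator, there is a $k$-basis $\{r_j\}$ of $R$ with $dr_j\in\operatorname{span}_k\{r_{j'}:j'<j\}$, witnessing $R\in\cD_{dg}(R)$. Choosing a graded $k$-basis $\{v_\alpha\}$ of $V$, I order $\{r_j\otimes v_\alpha\}$ lexicographically by $(|v_\alpha|,\alpha,j)$. Because $R$ sits in non-negative degrees, writing $d_M(1\otimes v_\alpha)=\sum r_{vw}\otimes w$ with $|r_{vw}|+|w|=|v_\alpha|-1$ forces $|w|<|v_\alpha|$; combined with the Leibniz rule $d_M(r_j\otimes v_\alpha)=(dr_j)\otimes v_\alpha\pm r_j\,d_M(1\otimes v_\alpha)$ and the triangularity of $\{r_j\}$, the product basis is triangular in the lex order, and the associated filtration satisfies the required condition.

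\textbf{General case.} Decompose $V=V^\flat\oplus V^{ac}$ as chain complexes over $k$, with $V^\flat$ concentrated in degrees $\geq n_0$ and $d_{V^\flat}=0$ (a representative of $\H_\bt V$), and $V^{ac}$ a direct sum of acyclic disks $k\xrightarrow{\sim}k$. I would then modify the section $V\hookrightarrow M$ so that the lifted sub-$R$-module $R\cdot V^\flat\subset M$ becomes a subcomplex. The modification is constructed inductively along the finite filtration $M\supset\m M\supset\cdots\supset\m^{N+1}M=0$: at step $i$, the obstruction to extending the lift is a class in $(\m^i/\m^{i+1})\otimes_k V^{ac}$, which vanishes because tensoring the contractible $V^{ac}$ over $k$ with anything yields a contractible complex. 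Given such a lift, the bounded-below case applies to the subcomplex $R\cdot V^\flat$. The complementary graded piece, whose reduction mod $\m$ is the contractible $V^{ac}$, admits a disk-by-disk filtration (each disk's boundary cell attached before its preimage cell, with the disks well-ordered arbitrarily), and concatenating the two filtrations yields the required filtration of $M$.

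\textbf{Main obstacle.} The technical crux is the $\m$-adic lifting of $V^\flat$ in the general case. This is exactly where the bounded-below homology hypothesis is used: contractibility of the complementary summand $V^{ac}$ forces the obstructions to vanish. Without that hypothesis the inductive lifting can fail and the required filtration of $M$ need not exist, in agreement with the general fact that $\cD^{ctr}_{dg}(R)$ is strictly larger than $\cD_{dg}(R)$ for unbounded examples.
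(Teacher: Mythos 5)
Your overall architecture --- split $M\otimes_Rk$ into its homology $V^{\flat}$ and a contractible complement $V^{ac}$, treat the bounded-below part by a degree filtration and the contractible part disk by disk --- is sound, and the bounded-below case rests on the correct observation that non-negativity of $R$ forces $d$ to strictly lower the degree of the generators (though the certifying filtration should be by the dg $R$-submodules $R\cdot\tilde{V}_{\le p}$ rather than by $k$-spans of lex-initial segments of $\{r_j\otimes v_\alpha\}$, which are not $R$-submodules). The genuine gap is in the final step. The complementary graded piece is not a subcomplex of $M$ (only $R\cdot V^{\flat}$ is, after your correction), so it must be handled as the quotient $Q=M/R\cdot V^{\flat}$; and for $Q$ the assertion that the lifted disks may be attached ``with the disks well-ordered arbitrarily'' is false for an arbitrary lift of a basis of $V^{ac}$. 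For instance, over $R=k[\epsilon]/(\epsilon^2)$ with $\epsilon$ in degree $0$, let $Q$ be free on $a_n$ (degree $n$) and $b_n$ (degree $n-1$) for $n\in\Z$, with $da_n=b_n+\epsilon a_{n-1}$ and $db_n=-\epsilon b_{n-1}$; the reduction is a sum of disks $\{a_n,b_n\}$, but every $db_n$ involves $b_{n-1}$, so attaching $b_n$ requires $b_{n-1}$ first and no well-ordering of this basis is triangular (the module is still cofibrant, but only after the change of basis $b_n\mapsto da_n$). The missing idea is to take the boundary cell of each disk to be the literal image $d(\tilde{u})$ of the lifted preimage cell: these are then honest cycles, $R\cdot d(\tilde{U})$ is a dg submodule free on cycles, and $d(\tilde{u})\in R\cdot d(\tilde{U})$ tautologically, so the whole acyclic part is semi-free in two stages with no ordering condition. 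Without that choice (or an argument lifting the contracting homotopy of $V^{ac}$ through the nilpotent $\m$ and changing basis accordingly), the step as written fails.

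Once that point is supplied, your $\m$-adic obstruction argument for correcting the lift of $V^{\flat}$ becomes superfluous, and this is where you diverge from the paper. The paper's proof places the acyclic summand at the \emph{bottom} of the filtration, as $R\,d(\tilde{U})\subset R\tilde{U}\oplus R\,d(\tilde{U})$ with the boundary cells taken to be literal images (hence cycles), and then attaches the homology generators $\tilde{V}$ degree by degree on top: since $R$ is non-negatively graded and $\tilde{V}_i=0$ for $i<n_0$, the element $d(\tilde{v})$ for $\tilde{v}\in\tilde{V}_n$ lies in $M_{n-1}$ and hence involves only generators of degree $<n$, all already attached. No obstruction theory is needed anywhere. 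Finally, your closing diagnosis misplaces the role of the hypothesis: $V^{ac}$ is contractible, and your lifting obstructions vanish, whether or not $\H_*(M\otimes_Rk)$ is bounded below; the hypothesis is used only to make the degree filtration of $R\cdot V^{\flat}$ well-ordered, and it is exactly there that the standard example $\cdots\xra{\epsilon}R\xra{\epsilon}R\xra{\epsilon}\cdots$ escapes $\cD_{dg}(R)$.
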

\begin{proof}
 We need to show that $M$ is cofibrant in the projective model structure of the first kind. Since $k$ is a field, we can decompose $M\ten_Rk$ as $V \oplus U \oplus d(U)$, with $d$ injective on $U$ and zero on $V$. The hypothesis ensures there exists $n_0$ such that $V_i=0$ for all $i < n_0$.
 
 Now, choose lifts $\tilde{V}$ and $\tilde{U}$ of $V$ and $U$ to $k$-linear graded subspaces of $M$. We can then express $M$ as the colimit of the diagram
 \[
  Ad(\tilde{V}) \into A\tilde{V} \oplus Ad(\tilde{V}) \into A\tilde{V} \oplus Ad(\tilde{V})\oplus A\tilde{U}_{n_0} \into Ad(\tilde{V})\oplus A\tilde{U}_{n_0} \oplus A\tilde{U}_{n_0+1 } \into \ldots 
 \]
These are subcomplexes, with $d=0$ on the quotients because $d(\tilde{U}_n) \subset A\tilde{V} \oplus Ad(\tilde{V})\oplus \bigoplus_{n_0\le i < n}A\tilde{U}_i$ for degree reasons, so $M$ is cofibrant.
 \end{proof}

\begin{definition}
 For $R \in dg\Art$, define the coloured (symmetric) dg operads (a.k.a. symmetric multicategories) $\cD^{\ten}_{dg}(R)$ and $\cD^{\ctr,\ten}_{dg}(R)$ to be those associated to the tensor products $\ten_R$ on the dg categories $\cD_{dg}(R)$ and $\cD^{\ctr}_{dg}(R)$. 
 
 Explicitly, for $M_1, \ldots, M_r, N \in \cD^{\ctr}_{dg}(R)$, we have
 \[
  \cD^{\ctr,\ten}_{dg}(R)(M_1,\ldots, M_r; N):=\HHom_R(M_1\ten_RM_2\ten_R\ldots\ten_RM_r;N)
 \]
with the obvious composition rules, with $\cD^{\ten}_{dg}(R)$ the full dg suboperad on colours $\Ob \cD_{dg}(R)$.

For $R =\{R(i)\}_i\in \pro(dg\Art)$, define  $\cD^{\ctr,\ten}_{dg}(R)$ similarly, but using completed tensor products $M\hten_RP:= \Lim_i (M\ten_RP\ten_RR(i))$. 
\end{definition}

 Given a morphism $R \to S$ in $\pro(dg\Art)$, there is a natural dg tensor functor $\hten_RS \co \cD^{\ctr}_{dg}(R) \to \cD^{\ctr}_{dg}(S) $, and hence a multifunctor  $\cD^{\ctr,\ten}_{dg}(R) \to \cD^{\ctr,\ten}_{dg}(S) $.

  \begin{definition}
 Similarly, define  the coloured dg properads  $\cD^{\ten,\ten}_{dg}(R)$ and  $\cD^{\ctr,\ten,\ten}_{dg}(R)$, of cofibrant $R$-modules and  projective $R$-contramodules respectively, to have the same objects as  $\cD_{dg}(R)$ and $\cD^{\ctr}_{dg}(R)$, and morphisms
 \[
  \cD^{\ctr,\ten}_{dg}(R)(M_1,\ldots, M_r; N_1, \ldots, N_s):=\HHom_R(M_1\ten_RM_2\ten_R\ldots\ten_RM_r;N_1\ten_RN_2\ten_R\ldots\ten_RN_s)
 \]
with the obvious composition rules, and $\cD^{\ten,\ten}_{dg}(R)$ the full dg subproperad.
  \end{definition}
  
%
%
\subsection{The model structure on coloured dg properads}

Whereas the results so far feature throughout the rest  of the paper,  we now establish a result we will only use in \S \ref{propsn}. 

By \cite{tabuadaMCdgcat}, there is a model structure on the category of small $k$-linear dg categories in which weak equivalences are given by quasi-equivalences, i.e\. 
dg functors $F \co \C \to \cD$ 
which are componentwise quasi-isomorphisms 
with $\H^0F \co \H^0\C \to \H^0\cD$  an equivalence of categories. Here, the homotopy category $\H^0\C$ has the same objects as $\C$, but morphisms $\H^0\C(x,y)$ from $x$ to $y$. Fibrations in this model structure are componentwise surjections for which $\H^0F$ is an isofibration of categories in the sense that for any object $x \in \H^0\C$ and any isomorphism $v \co F(x) \to y$ in $\H^0\cD$, there exists an isomorphism $\tilde{v} \co x \to \tilde{y}$  with $F(\tilde{v})=v$.

In \cite[Theorem 5.2]{stanculescuModel} (taking $\cV$ to be $k$-linear chain complexes), this is extended to give a model structure on the category of  small $k$-linear symmetric dg multicategories (i.e.\ coloured $k$-linear dg operads) in which weak equivalences (resp. fibrations) are componentwise quasi-isomorphisms (resp. surjections) $F$ for which $\H^0F$ is an equivalence (resp. isofibration) on the underlying categories.

For want of a suitable reference, we now prove the analogous statement for coloured dg properads. The weak equivalences for this model structure will be referred to as quasi-equivalences.

\begin{lemma}\label{properadmodellemma}
There is a cofibrantly generated model structure on the category of small coloured $k$-linear dg properads in which weak equivalences (resp. fibrations) are componentwise quasi-isomorphisms (resp. surjections) $F$ for which $\H^0F$ induces an equivalence (resp. isofibration) on the underlying categories. 
\end{lemma}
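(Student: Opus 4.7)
The plan is to apply Kan's transfer theorem for cofibrantly generated model structures to the free-forgetful adjunction between small coloured $k$-linear dg properads and the category of biarity collections: indexed families of chain complexes $\cP(c_1,\ldots,c_r;\, d_1,\ldots, d_s)$ over pairs of finite sequences of colours, equipped with the obvious input and output symmetric group actions. Biarity collections carry a projective model structure with componentwise weak equivalences and fibrations, generated by the standard sphere-to-disk inclusions and disk generators of chain complexes placed in each biarity and on each colour set.

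To incorporate the isofibration condition on $\H^0$, I would follow Tabuada and Stanculescu by enlarging the set of generating trivial cofibrations with an \emph{interval generator}: for each pair of new colours $c, c'$, the inclusion of the free dg properad on $\{c\}$ into the free dg properad on an invertible unary morphism $c \to c'$ (equivalently, a chain equivalence together with bracketing homotopies). Being concentrated in unary biarities, this generator behaves formally like the corresponding generator in the dg category and dg multicategory cases, and ensures that weak equivalences in the transferred structure induce equivalences on $\H^0$.

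Smallness of the domains of the generators is immediate, since the free properad functor is built as a sequential colimit over connected directed graphs and preserves filtered colimits of chain complexes. The main step, and main obstacle, is verifying that every pushout in coloured dg properads along the free-properad image of a generating acyclic cofibration $0 \to D^n$ (a contractible disk in some biarity) yields a componentwise quasi-isomorphism. To handle this, I would filter the pushout by the number of newly inserted disk cells appearing in each connected graph composition; the associated graded decomposes, in each biarity, as a direct sum of tensor products in which at least one factor is a contractible disk. Since $\Char k = 0$, taking symmetric group coinvariants preserves acyclicity, so every graded piece is acyclic and the filtration yields a quasi-isomorphism. The interval generator is dealt with exactly as in the dg category case, since its effect is purely unary. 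Kan's transfer theorem then produces the desired cofibrantly generated model structure, and its fibrations are characterised as componentwise surjections whose induced map on the homotopy categories $\H^0$ has the right lifting property against the interval generator, i.e.\ is an isofibration of underlying categories, as stated.
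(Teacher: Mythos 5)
Your proposal is correct in substance and follows essentially the same route as the paper: the paper applies Hovey's recognition theorem directly to coloured dg properads, with generating cofibrations given by the object generator plus inclusions $k[n-1]\into\cone(k)[n-1]$ placed in each biarity $(p,q)$, generating trivial cofibrations given by Tabuada's interval generator plus the inclusions $\coprod_{p+q}\oI\into\cD(n)_{p,q}$ (the free-properad images of your sphere/disk generators), and the key acyclicity of pushouts along $J$ handled by exactly the filtration-by-new-cells argument you describe, deferred to Tabuada's Lemma 2.2 ``with more tensor expressions''.

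One small correction to the framing: this cannot literally be Kan's transfer theorem along the free--forgetful adjunction to collections, because the intended weak equivalences are not created by the forgetful functor --- a componentwise quasi-isomorphism that fails to be essentially surjective on $\H^0$ must not be a weak equivalence, and once you enlarge $J$ by the interval generator the fibrations are no longer the transferred ones either. You need the recognition theorem with the class $\cW$ of quasi-equivalences specified by hand, and then your verifications (pushouts of $J$ lie in $\cW$, $J$-inj $= I$-inj $\cap\,\cW$ via the characterisation of isofibrations) are precisely the required hypotheses; this is a presentational fix, not a mathematical gap.
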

\begin{proof}
We apply \cite[Theorem 2.1.19]{hovey}. Take the set $I$ of generating cofibrations  to consist of:
\begin{enumerate}
\item[$I1$] \cite{tabuadaMCdgcat}'s dg functor $Q$ from $\emptyset$ to the category $\oI$ with one object and only $k$-linear multiples of the identity morphism, and 
\item[$I2$] for each $p,q \ge 0$ and $n\in \Z$, the inclusion dg functor $S(n)_{p,q}\co \C(n)_{p,q} \into \cD(n)_{p,q}$, where $\C(n)_{p,q}$ (resp. $\cD(n)_{p,q}$) is  the dg category on objects $x_1, \ldots,x_p,y_1, \ldots, y_q$ with only $k$-linear multiples of identity morphisms, together with $\C(n)_{p,q}(x_1, \ldots,x_p; y_1, \ldots, y_q) \cong k[n-1]$ (resp.   $\cD(n)_{p,q}(x_1, \ldots,x_p; y_1, \ldots, y_q) \cong \cone(k)[n-1]$).
\end{enumerate}
Take the set $J$ of generating trivial  cofibrations  to consist of:
\begin{enumerate}
\item[$J1$] \cite{tabuadaMCdgcat}'s dg functor $F$ from $\oI$ to the dg category $\cK$ with two objects, homotopy equivalences between them and a compatibility condition on the homotopies, and 
\item[$J2$] for each $p,q \ge 0$ and $n\in \Z$, the inclusion dg functor $\coprod_{p+q} \oI  \into \cD(n)_{p,q}$.
\end{enumerate}

It is immediate that the class $I2$-inj (resp. $J2$-inj) of morphisms with the right lifting property with respect to $I2$ (resp. $J2$) consists of maps which are componentwise surjective quasi-isomorphisms (resp. surjections). As in \cite[Theorem 4.1]{kellerModelDGCat}, the further condition of $F$  lying in $I2$-inj (resp. $J2$-inj) is equivalent to $\H^0F$ inducing a surjective equivalence (resp. isofibration) on the underlying categories. 

It is immediate that the class $\cW$ of weak equivalences satisfies the two-out-of-three property and that the domains of $I$ and $J$ are small. The reasoning above shows that $J$-inj is precisely the intersection of $I$-inj with $\cW$. It thus remains only to show that $J$-cells all lie in $\cW$, which reduces to showing that pushouts of morphisms in $J$ are weak equivalences. This follows as in the dg category case of \cite[Lemma 2.2]{tabuadaMCdgcat}, with more tensor expressions but exactly the same arguments for acyclicity.
\end{proof}

\section{Derived and contraderived deformations of algebras over dg operads}\label{catPalgsn}

 
 \begin{definition}
Given a $k$-linear dg operad $\cP$, let $ \Alg_{\cP}^{\ctr}(R) $ (resp. $ \Alg_{\cP}(R) $) be the category whose objects are $\cP$-algebras in $\cD^{\ctr,\ten}_{dg}(R)$ (resp. $\cD^{\ten}_{dg}(R)$)  and whose morphisms are elements of $\z^0\HHom_{R}$ respecting the $\cP$-algebra structures.

 \end{definition}

 \begin{definition}\label{DDerdef}
Given a  $\cP$-algebra $A$ in a pre-triangulated dg tensor category $\C$, we write $\DDer_{\cP}(A,M)$ for the complex of  $\cP$-derivations from $A$ to $M$.
Here, $M$ is a Beck $A$-module, meaning that $A \oplus M$ is a   $\cP$-algebra in $\C$ for which the projection map $A \oplus M \to A$ and the addition map $(A\oplus M)\by_A(A \oplus M) \to A \oplus M$ are both $\cP$-algebra homomorphisms. Explicitly, 
\[
 \DDer_{\cP}(A,M)^n:= \Hom(A, A \oplus \cone(M)_{[-n]} )\by_{\Hom(A, A)}\{\id\},
\]
where $\Hom$s on the right-hand side are in the category of $\cP$-algebras in $\C$.
The differential $\DDer_{\cP}(A,M)^n\to \DDer_{\cP}(A,M)^{n+1}$ is then induced by the obvious map $\cone(M)_{[-n]} \to \cone(M)_{[-n-1]} $.
 \end{definition}
 
 \begin{definition}
  We say that a morphism $R \to R/I$ in $dg\Art$ is a small extension if $I\cdot \m(R)=0$.
 \end{definition}

 Writing $\bar{A}:= A/\m(R)A$ and similarly for $C$, we have:
 \begin{lemma}\label{defHomlemma} 
 Take $\cP$-algebras $C,A \in \Alg_{\cP}^{\ctr}(R)$, a small extension $R \to R/I$  and a  morphism $f \co \bar{C} \to \bar{A}$ in $\Alg_{\cP}(k)$, with $\bar{C}$ cofibrant in the model structure of \cite[Theorem 4.1.1]{hinichHtpyAlg}. We then have a short exact sequence
 \begin{align*}
  0 \to \z^0 \DDer_{\cP}(\bar{C} ,f_* \bar{A}\ten_k I)
  \to &\Hom_{\cP\ten R}(C, A)_f \\
  &\to \Hom_{\cP\ten (R/I)}(C/IC, A/IA)_f \xra{o_f} \H^1 \DDer_{\cP}(\bar{C},f_* \bar{A}\ten_k I)
 \end{align*}
of groups and sets, where $(-)_f$ denotes the fibre over $ f \in \Hom_{\cP}(\bar{C}, \bar{A})$.
 \end{lemma}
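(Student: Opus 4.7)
My plan is to follow the classical square-zero extension obstruction theory, adapted to the contraderived setting.

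First, I would extract the underlying graded picture. Since $C,A\in\Alg^{\ctr}_{\cP}(R)$, the graded $R$-modules split as $C_{\#}\cong R_{\#}\ten_k\bar C_{\#}$ and $A_{\#}\cong R_{\#}\ten_k\bar A_{\#}$. Combined with $I\cdot\m(R)=0$, this yields canonical isomorphisms $IC\cong I\ten_k\bar C$ and $IA\cong f_*\bar A\ten_k I$ of $R$-modules in chain complexes: any $R$-linear perturbation of the tensor-product differential has coefficients in $\m(R)$, which annihilates $I$. In particular $A\to A/IA$ and $C\to C/IC$ are square-zero $\cP$-algebra extensions with the indicated kernels as Beck modules.

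For exactness at $\Hom_{\cP\ten R}(C,A)_f$, suppose $\tilde g_1,\tilde g_2\in\Hom_{\cP\ten R}(C,A)_f$ agree modulo $I$. Their difference $D=\tilde g_1-\tilde g_2$ lands in $IA$; since $\m(R)IA=0$, it descends to a graded $k$-linear map $\bar D\co\bar C\to f_*\bar A\ten_k I$. Expanding the $\cP$-algebra equations for $\tilde g_i$ and any $\mu\in\cP(r)$, and using $I^2\subset I\m(R)=0$ to drop all terms with two or more $D$-factors, I find that $\bar D$ satisfies the Leibniz rule with respect to $f$; the chain-map condition similarly forces $\bar D$ to commute with differentials. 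Thus $\bar D\in\z^0\DDer_{\cP}(\bar C,f_*\bar A\ten_k I)$, and the correspondence $\tilde g\mapsto\tilde g+\bar D$ is plainly reversible, identifying the kernel.

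For the obstruction map, given $g\in\Hom_{\cP\ten(R/I)}(C/IC,A/IA)_f$, I would use cofibrancy of $\bar C$ in Hinich's model structure to present $\bar C$ as a semi-free $\cP$-algebra and build a $\cP$-algebra lift $\tilde g\co C\to A$ inductively along its cellular filtration. At each cell attachment, extending a partial lift is obstructed by a derivation-valued cocycle into $f_*\bar A\ten_k I$, and, via the cone description of Definition \ref{DDerdef}, these local obstructions patch into a single closed cochain $o(g)\in\z^1\DDer_{\cP}(\bar C,f_*\bar A\ten_k I)$. Different choices at each cellular step change $o(g)$ by coboundaries, so the class $o_f(g):=[o(g)]\in\H^1\DDer_{\cP}$ is canonical, and its vanishing lets the inductive construction succeed, giving a lift and exactness at the rightmost term.

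The hardest step will be showing that the local cell-by-cell obstructions assemble consistently into a single closed cochain in $\DDer^1_{\cP}$ and that the resulting class is insensitive to the inductive choices. This amounts to a Maurer--Cartan-type identity encoding compatibility of the $\cP$-operations with the differential, expressed via the cone formalism of Definition \ref{DDerdef}; the essential input is the $\cP$-algebra axioms for $C$ and $A$ together with Hinich's explicit semi-free structure on $\bar C$. The computation is routine but notationally involved, and reflects the fact that $\DDer_{\cP}(\bar C,-)$ on a Hinich-cofibrant object represents the derived derivation functor controlling these deformations.
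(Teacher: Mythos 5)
Your identification of the kernel and your use of $I\cdot\m(R)=0$ to rewrite $IA$ as $f_*\bar A\ten_k I$ match the paper's proof exactly, and your exactness argument at the middle term is the same torsor argument the paper uses. Where you diverge is the construction of $o_f$: the paper does not induct over a cellular filtration. It observes that since $\bar C$ is cofibrant, $C_{\#}$ is a retract of a \emph{free graded} $\cP_{\#}\ten R_{\#}$-algebra, so a graded-algebra lift $\tilde f\co C_{\#}\to A_{\#}$ of $f'$ exists in one step (surjections of graded modules lift against retracts of free graded algebras, no induction required), and then the obstruction cocycle is simply the commutator $[d,\tilde f]$. This is automatically an $\tilde f$-derivation of cochain degree $1$ (because $\tilde f$ is an algebra map and $d$ is a derivation) and automatically closed (since $d^2=0$), and a different graded lift $\tilde f+u$ changes it by $[d,u]$. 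In other words, the step you flag as hardest --- assembling local cell-by-cell obstructions into a single closed cochain in $\DDer_{\cP}^1$ --- dissolves entirely once you separate the graded structure (where lifting is unobstructed) from the differential (where the whole obstruction lives).

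Two concrete issues would need attention if you insisted on the cellular route. First, cofibrant objects in Hinich's model structure are only \emph{retracts} of semi-free algebras, so you cannot simply ``present $\bar C$ as a semi-free $\cP$-algebra''; you must either carry the retraction through the induction or note, as the paper does, that the retract property alone suffices for the graded lifting. Second, the cellular filtration of $\bar C$ lifts to a filtration of the graded algebra $C_{\#}\cong R_{\#}\ten_k\bar C_{\#}$, but the differential $d_C$ only respects that lifted filtration modulo $\m(R)$; so verifying the chain-map condition ``one cell at a time'' on $C$ requires values of the partial lift outside the current filtration stage, and the induction does not close up as stated. Neither point is fatal, but both are reasons the one-shot graded lift is the right organization of the argument.
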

\begin{proof}
 This is a standard obstruction theory argument. Since $\bar{C}$ is cofibrant, $C_{\#}$ is a retract of a freely generated graded $\cP_{\#}\ten R_{\#}$-algebra. We can therefore lift $f' \in \Hom_{\cP\ten (R/I)}(C/IC, A/IA)_f$  to a morphism $\tilde{f} \co C_{\#} \to A_{\#}$ of graded algebras, with any other choice being of the form $\tilde{f}+u$ for a $\cP_{\#}\ten R_{\#}$-derivation $u \co C_{\#} \to IA_{\#}$, or equivalently a $\cP_{\#}$-derivation $ \bar{C}_{\#} \to I\ten_k \bar{A}_{\#}$,   since $I\cdot \m(R)=0$. 
 
 The obstruction to this being a chain map is the commutator $[d, \tilde{f}]$, which is a closed  $\cP\ten R$-derivation $C \to IA$ of cochain degree $1$, or equivalently a $\cP$-derivation $C \to I\ten_k \bar{A}$. A different choice adds $[d,u]$ so exactness on the right follows by setting $o_f(f')$ to be the class of $[d, \tilde{f}]$.
 
 When the lift $\tilde{f}$ is closed, an alternative lift $\tilde{f}+u$ is closed if and only if $u$ is so, giving exactness on the left.
\end{proof}

 \begin{lemma}\label{defOblemma} 
 Take a small extension $R \to R/I$ and  $C \in \Alg_{\cP}^{\ctr}(R/I)$ with $\bar{C}\in \Alg_{\cP}(k)$ cofibrant. Then the  potential  obstruction to lifting $C$ to
 $\Alg_{\cP}^{\ctr}(R)$ lies in $\H^{2} \DDer_{\cP}(\bar{C},\bar{C}\ten_k I)$.
 
 If the obstruction vanishes, the set of lifts is a torsor for $\z^1 \DDer_{\cP}(\bar{C},\bar{C}\ten_k I)$
 \end{lemma}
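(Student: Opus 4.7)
The plan is to run a two-stage obstruction calculation directly parallel to the proof of Lemma~\ref{defHomlemma}: first lift the underlying graded $\cP_{\#}\ten R_{\#}$-algebra, then lift the differential. For the graded step, cofibrance of $\bar{C}$ makes $\bar{C}_{\#}$ a retract of a free graded $\cP_{\#}$-algebra, whence $C_{\#}$ is a retract of a free graded $\cP_{\#}\ten(R/I)_{\#}$-algebra; this obviously lifts to a free graded $\cP_{\#}\ten R_{\#}$-algebra along the surjection $R\to R/I$, and the corresponding retract provides a graded $\cP_{\#}\ten R_{\#}$-algebra $\tilde{C}_{\#}$ lifting $C_{\#}$ with $\tilde{C}_{\#}\cong R_{\#}\hten \bar{C}_{\#}$ as graded $R_{\#}$-modules, so $\tilde{C}_{\#}$ is free, as required for membership of $\Alg_{\cP}^{\ctr}(R)$.

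Next I would lift the differential $d$ on $C$ to a degree $-1$ graded $\cP_{\#}\ten R_{\#}$-derivation $\tilde{d}$ on $\tilde{C}_{\#}$; such a lift exists because the space of lifts is an affine space over $\cP_{\#}$-derivations $\bar{C}_{\#}\to I\ten_k\bar{C}_{\#}$ (using $I\cdot\m(R)=0$ to identify $I\tilde{C}_{\#}$ with $I\ten_k\bar{C}_{\#}$, exactly as in Lemma~\ref{defHomlemma}). The square $\tilde{d}^2$ reduces to $d^2=0$ modulo $I$, so it defines a $\cP$-derivation $\bar{C}\to\bar{C}\ten_k I$ of cochain degree $2$, and it is closed because $[\tilde{d},\tilde{d}^2]=\tilde{d}^3-\tilde{d}^3=0$. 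Replacing $\tilde{d}$ by $\tilde{d}+u$ alters $\tilde{d}^2$ by $[\tilde{d},u]+u^2=[\tilde{d},u]$, since $u^2$ lands in $I^2\tilde{C}_{\#}\subset(I\cdot\m(R))\tilde{C}_{\#}=0$, so the class $[\tilde{d}^2]\in\H^2\DDer_{\cP}(\bar{C},\bar{C}\ten_k I)$ is well-defined and vanishes precisely when some choice of $\tilde{d}$ squares to zero.

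When the obstruction vanishes, two square-zero lifts $\tilde{d},\tilde{d}'$ differ by $u=\tilde{d}'-\tilde{d}$ satisfying $[\tilde{d},u]=0$, namely an element of $\z^1\DDer_{\cP}(\bar{C},\bar{C}\ten_k I)$; conversely any such $u$ added to $\tilde{d}$ preserves the square-zero condition (again by $u^2=0$), giving the stated torsor. The main technical step is really the graded lifting at the outset: one has to check that Hinich-cofibrance of $\bar{C}$ supplies the retract-of-free graded presentation used in the first paragraph, exactly as is invoked without comment in Lemma~\ref{defHomlemma}. The remainder is routine Koszul-signed bookkeeping translating the cone description of Definition~\ref{DDerdef} into the cocycle and coboundary statements above.
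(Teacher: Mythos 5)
Your proof is correct and follows essentially the same two-stage obstruction argument as the paper: lift the underlying graded algebra using the retract-of-free presentation supplied by cofibrancy of $\bar{C}$, then lift the differential and identify $\tilde{d}\circ\tilde{d}$ as a closed obstruction cocycle whose class changes by the coboundary $[d,u]$ under a change of lift, with the torsor statement falling out of the same computation. The extra details you supply (closedness of the cocycle, the vanishing of $u^2$ via $I^2\subset I\cdot\m(R)=0$, and the lifting of the retract and of the graded free presentation) are precisely the steps the paper's proof leaves implicit.
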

\begin{proof}
 This is another standard obstruction theory argument. Since $\bar{C}$ is cofibrant, $C$ is a retract of a freely generated graded algebra, so  we can lift $C$ to a graded $\cP_{\#}\ten R_{\#}$-algebra $\tilde{C}_{\#}$ which is a retract of a free algebra. Freeness also allows us to lift the differential $d$ on $C$ to a differential $\tilde{d}$ on $\tilde{C}_{\#}$, with alternative choices given by $\tilde{d}+v$ for graded $\cP_{\#}\ten R_{\#}$ derivations $v \co \tilde{C}_{\#} \to I\tilde{C}_{\#[-1]}$, or equivalently $\cP$-derivations $v \co \bar{C}_{\#} \to I\ten_k\bar{C}_{\#[-1]}$.
 
 The data $(\tilde{C}_{\#},\tilde{d})$ define an object of $\cD^{\ctr,\ten}_{dg}(R)$ if and only if $\tilde{d}\circ \tilde{d}=0$. Since $(\tilde{d}+v)\circ (\tilde{d}+v) =\tilde{d}\circ \tilde{d} +[d,v]$ in $\z^{2} \DDer_{\cP}(\bar{C},\bar{C}\ten_k I)$, it follows that the obstruction is the class of $\tilde{d}\circ \tilde{d}$ in homology.  The choices of $v$ which do not affect the obstruction are precisely those with $[d,v]=0$, giving the torsor statement.
 \end{proof}
 
 \begin{definition}
  Say that a morphism $f \co C \to A$   in $\Alg_{\cP}^{\ctr}(R)$ is a mod-cofibration (resp. mod-quasi-isomorphism) if $\bar{f} \co C/\m(R)C \to A/\m(R)A$ is a cofibration (resp. quasi-isomorphism) in  the standard model structure on $\Alg_{\cP}(k)$.
 \end{definition}

\begin{lemma}
 For a morphism $f \co C \to A$   in $\Alg_{\cP}^{\ctr}(R)$, the following conditions are equivalent:
 \begin{enumerate}
  \item  the cone of $f$ is contra-acyclic,
  \item the morphism 
  underlying $f$ is a  homotopy equivalence in $\cD^{\ctr}_{dg}(R)$,
  \item $f$ is a mod-quasi-isomorphism.
 \end{enumerate}
\end{lemma}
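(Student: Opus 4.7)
My plan is to prove the cycle $(2)\Rightarrow(3)\Rightarrow(1)\Rightarrow(2)$, since $(2)$ is the strongest of the three conditions and only the passage $(3)\Rightarrow(1)$ has real content. The step $(2)\Rightarrow(3)$ is essentially formal: a dg homotopy equivalence in $\cD_{dg}^{\ctr}(R)$ admits a strict two-sided homotopy inverse, which reduces modulo $\m:=\m(R)$ to a dg homotopy equivalence $\bar f \co \bar C\to \bar A$ between complexes of $k$-vector spaces, and this is in particular a quasi-isomorphism.

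For $(3)\Rightarrow(1)$, I would set $M := \cone(f)$. The class $\cD_{dg}^{\ctr}(R)$ is closed under cones, so $M$ is quasi-free over $R$, and $M\ten_R k = \cone(\bar f)$ is acyclic by hypothesis. Since $R$ is genuinely Artinian (not merely up to quasi-isomorphism), $\m$ is honestly nilpotent and there is some $N$ with $\m^{N+1}=0$, yielding a finite $\m$-adic filtration
\[
M \;\supseteq\; \m M \;\supseteq\; \m^2 M \;\supseteq\; \cdots \;\supseteq\; \m^{N+1} M \;=\; 0.
\]
Because $M$ is free as a graded $R$-module, the successive subquotients are identified as complexes with
\[
\m^i M / \m^{i+1} M \;\cong\; (M\ten_R k) \ten_k (\m^i/\m^{i+1}),
\]
each of which is the tensor product of an acyclic complex of $k$-vector spaces with another complex of $k$-vector spaces, hence acyclic and (being over a field) contractible, so in particular contra-acyclic. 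Downward induction in $i$, using that the contra-acyclics form a triangulated subcategory closed under cones, then shows that $M$ itself is contra-acyclic.

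For $(1)\Rightarrow(2)$, I would invoke Positselski's model structure of the second kind on $R$-modules in complexes: $C$ and $A$ are both cofibrant there (since they are quasi-free) and fibrant (since all surjections are fibrations), and $(1)$ says precisely that $f$ is a weak equivalence in this structure. Standard model-category theory then furnishes a model-theoretic homotopy inverse, which for this dg-enriched model structure coincides with a dg homotopy inverse expressed in the hom complexes $\HHom_R(-,-)$, yielding $(2)$.

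The main obstacle is the passage $(3)\Rightarrow(1)$; what makes it tractable rather than requiring Positselski's full co/contra formalism is that Artinian-on-the-nose forces $\m$-adic filtrations to terminate in finitely many steps, so one bypasses any subtle questions about infinite products or homotopy limits when verifying contra-acyclicity, and the field hypothesis on $k$ makes each graded piece automatically contractible.
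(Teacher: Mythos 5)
Your proof is correct, and for the one implication with real content it takes a genuinely different route from the paper. The paper first reduces to the case $C=0$, $A=\cone(f)$, and then proves $(3)\Rightarrow(1)$ by obstruction theory: writing $R\to k$ as a tower of small extensions and applying its Lemma \ref{defHomlemma} inductively, it shows the cone has the left lifting property with respect to all surjections, hence is trivially cofibrant in the model structure of the second kind, hence contra-acyclic. You instead filter $M=\cone(f)$ by the finite $\m$-adic filtration and observe that, by quasi-freeness and the Leibniz rule, each subquotient is $(M\ten_Rk)\ten_k(\m^i/\m^{i+1})$, which is acyclic over the field $k$, hence contractible (with a contraction that is automatically $R$-linear since $\m$ acts trivially), hence contra-acyclic; closure of the contra-acyclics under extensions of complexes --- which is exactly what the ``totalizations of exact triples'' generators in the definition provide --- then finishes the downward induction. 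Your argument is more self-contained for this lemma (it needs no obstruction calculus, only the definition of contra-acyclicity and the nilpotence of $\m$), while the paper's version reuses machinery it has already set up and will need again. The remaining implications are essentially as in the paper: the paper proves $(1)\Rightarrow(2)$ by splitting the fibration $\cocone(A)\to A$ for the trivially cofibrant $A$, which is a concrete instance of your appeal to Whitehead's theorem for fibrant--cofibrant objects in the dg model structure of the second kind, and $(2)\Rightarrow(3)$ by reducing the contracting homotopy mod $\m$, as you do. The only points worth making explicit in a write-up are the chain-level identification of the associated graded (it uses that $d_M$ is an $R$-derivation-compatible differential on a free graded module) and the extension-closure of contra-acyclics; both are true and routine.
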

\begin{proof}
 The conditions all remain unchanged if we take $\cP$ to be the trivial operad and take $C=0$, replacing $A$ with $\cone(f)$. Note that  $A$ is cofibrant in the model structure of  \cite[Theorem 8.2b]{positselskiDerivedCategories}. When $A$ is contra-acyclic, it is trivially cofibrant, so the fibration (i.e. surjection)  $\cocone(A) \to A$ must admit a section, giving the implication (1)$\implies$ (2).  If $A$ has a contracting homotopy, then $A/\m(R)A$ also does, giving $\H_*(A/\m(R)A)\cong 0$, so (2)$\implies$(3). 
 Finally,  if $\H_*(A/\m(R)A)\cong 0$, then $A$ has the left lifting property with respect to all fibrations, since we can write the morphism $A \to k$ as a transfinite limit of small extensions and inductively  apply Lemma \ref{defHomlemma}; thus $A$ is trivially cofibrant, so contra-acyclic.
\end{proof}

 \begin{proposition}\label{AlgPlocprop}
  For any $k$-linear  dg operad $\cP$ and any $R \in dg\Art_k$, the simplicial localisation  of $\Alg_{\cP}^{\ctr}(R)$ at 
  mod-quasi-isomorphisms  is equivalent to the simplicial category $\uline{\Alg}^{\ctr,c}_{\cP}(R)$  whose objects are mod-cofibrant $\cP$-algebras in $\cD^{\ctr,\ten}_{dg}(R)$, with morphisms
  \[
  n \mapsto  \Hom_{\cP\ten R\ten \Omega^{\bt}(\Delta^n)}(A\ten \Omega^{\bt}(\Delta^n) , A'\ten \Omega^{\bt}(\Delta^n)) \cong \Hom_{\cP\ten R}(A, A'\ten \Omega^{\bt}(\Delta^n))
  \]
in simplicial level $n$.
   \end{proposition}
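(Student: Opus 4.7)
The plan is to apply the standard criterion for computing simplicial localisations via simplicial frames: we will show that every object of $\Alg_\cP^\ctr(R)$ admits a mod-cofibrant replacement via a mod-quasi-isomorphism, and that the functor $A' \mapsto \Hom_{\cP\ten R}(A, A'\ten \Omega^\bt(\Delta^\bullet))$ sends mod-quasi-isomorphisms to weak equivalences of simplicial sets whenever $A$ is mod-cofibrant. Both statements will be proved by induction on the length of the nilpotent ideal $\m(R)$ along the filtration by small extensions $R\onto R/\m(R)^i$, with base case $R=k$ provided by Hinich's simplicial model structure on $\Alg_\cP(k)$ (where $\cD^\ctr_{dg}(k)=\cD_{dg}(k)$ and mod-cofibrations specialise to Hinich cofibrations, so the statement reduces to the standard mapping-space theorem for cofibrant objects in a simplicial model category).

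For mod-cofibrant replacement, given $C \in \Alg_\cP^\ctr(R)$ I build $\tilde{C}$ as a retract of a quasi-free $\cP\ten R$-algebra equipped with a mod-quasi-isomorphism $\tilde{C}\to C$. The construction is a transfinite small-object argument: at each stage we freely adjoin generators to $\tilde{C}$ to hit outstanding homology classes of $C$ and to kill surplus homology of the mod-reduction. Freeness of the newly adjoined generators over $\cP_\#\ten R_\#$ ensures that differentials and algebra maps extending the previously chosen data may be defined unimpeded; the resulting $\tilde{C}$ is mod-cofibrant because $\bar{\tilde{C}}$ is then a free $\cP$-algebra, hence cofibrant in Hinich's model structure.

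For homotopy invariance of the mapping space, given a mod-cofibrant $A$ and a mod-quasi-isomorphism $\phi\co A'\to A''$, I fix a small extension $R\to R/I$ and apply Lemma \ref{defHomlemma} levelwise with $A'\ten\Omega^\bt(\Delta^n)$ in place of $A'$. This produces a compatible system of four-term exact sequences which assemble into a homotopy fibration sequence of simplicial sets whose base is the corresponding mapping space over $R/I$ and whose fibre is weakly equivalent, via Dold--Kan and the Poincar\'e lemma for PL forms, to the cochain complex $\DDer_\cP(\bar{A},\bar{A'}\ten_k I)$. Since $\phi$ is a mod-quasi-isomorphism and $\bar{A}$ is cofibrant, the derivation complexes for $A'$ and $A''$ are quasi-isomorphic, while the bases agree by induction on the length of $\m(R)$, so the total mapping spaces are weakly equivalent. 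The two ingredients together identify $\uline{\Alg}^{\ctr,c}_\cP(R)$ with the hammock localisation, as required.

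The principal technical difficulty is the assembly of the Lemma \ref{defHomlemma} sequences into a genuine fibration of Kan complexes across simplicial levels. In particular, one must verify that the obstruction class $o_f$ vanishes uniformly along faces and degeneracies so that the four-term exact sequences promote to short exact sequences of simplicial abelian groups compatibly with the Kan condition. This is the step that most crucially uses mod-cofibrancy of $A$: because $\bar{A}$ is cofibrant, a graded lift of the relevant map can always be chosen, and the obstruction to closedness may then be corrected using the freeness of the generators of $A_\#$ over $\cP_\#\ten R_\#$.
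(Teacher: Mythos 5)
Your overall strategy (mod-cofibrant replacement, homotopy invariance of the $\Omega^{\bt}(\Delta^{\bullet})$-mapping spaces, then a Dwyer--Kan comparison) is the same as the paper's, but the construction of the mod-cofibrant replacement has a genuine gap. You assert that freeness of the adjoined generators over $\cP_{\#}\ten R_{\#}$ lets you define differentials and comparison maps ``unimpeded'', and later that ``the obstruction to closedness may then be corrected using the freeness of the generators''. That is not true: freeness only supplies graded-level lifts and the indeterminacy (the coboundaries $[d,v]$); lifting a square-zero differential, resp.\ a chain map, from $R/\m(R)^{n}$ to $R/\m(R)^{n+1}$ is obstructed by genuine cohomology classes in $\H^{2}\DDer_{\cP}(\bar{C},\bar{C}\ten_k I)$, resp.\ $\H^{1}\DDer_{\cP}(\bar{C},\bar{A}\ten_k I)$ (Lemmas \ref{defOblemma} and \ref{defHomlemma}), and these do not vanish for free. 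Moreover your prescription mixes ``hitting homology classes of $C$'' with ``killing surplus homology of the mod-reduction'': these are different equivalence relations (quasi-isomorphism over $R$ versus mod-quasi-isomorphism), and keeping them apart is exactly the point of the contraderived setting; a cell attachment killing a class in $\H_*(\bar{\tilde{C}})$ requires the attaching cycle to lift to a cycle over $R$, which is again obstructed.

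The paper circumvents this by reversing the order of operations: it first takes a Reedy cofibrant cosimplicial frame $\breve{\bar{A}}^{\ast}$ on a cofibrant replacement of $\bar{A}$ entirely over $k$, and then lifts the pair (algebra, quasi-isomorphism to $\bar{A}$) up the tower $R/\m(R)^{n+1}\to R/\m(R)^{n}$; the combined obstruction lives in the cohomology of the cocone of $\DDer_{\cP}(\breve{\bar{A}}^0,\breve{\bar{A}}^0\ten_k I)\to \DDer_{\cP}(\breve{\bar{A}}^0,\bar{A}\ten_k I)$, which is acyclic precisely because $\breve{\bar{A}}^0\to\bar{A}$ is a quasi-isomorphism from a cofibrant source. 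You would need an argument of this kind (or some other reason for the obstructions to vanish) to make your replacement step work. A secondary omission: the paper uses the whole cosimplicial frame $\breve{A}^{\ast}$, not just a single replacement, to run the comparison of \cite{DKEquivsHtpyDiagrams} identifying the hammock localisation with the $\Omega^{\bt}(\Delta^{\bullet})$-enrichment, whereas your appeal to ``the standard criterion'' skips this. Your second ingredient (assembling Lemma \ref{defHomlemma} into Kan fibrations with fibres computed by $\DDer_{\cP}$, inducting along small extensions) is essentially the paper's.
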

\begin{proof}
For any $A \in \Alg_{\cP}^{\ctr}(R)$,  there exists a Reedy cofibrant cosimplicial frame  $\breve{\bar{A}}^{\ast}$ of the $\cP$-algebra  $\bar{A}= A/\m(R)A$ in  $\cD^{\ten}_{dg}(k)$, as in   \cite[\S 5.4]{hovey}. Working up the tower $R/\m(R)^{n+1} \to R/\m(R)^n$ of small extensions, we can inductively  apply Lemmas \ref{defHomlemma} and \ref{defOblemma} to  construct a cosimplicial $\cP$-algebra  $\breve{A}^{\ast}$ in  $\cD^{\ctr,\ten}_{dg}(R)$ lifting $\breve{\bar{A}}^{\ast}$, equipped with a map $\breve{A}^0 \to A$ lifting $\breve{\bar{A}}^0 \to \bar{A}$.

An application of the same tower and lemmas shows that the simplicial functor $n \mapsto \Hom_{\cP\ten R}(\breve{A}^n,-)$ sends mod-quasi-isomorphisms in
$\Alg_{\cP}^{\ctr}(R)$ to weak equivalences. It also shows that  the simplicial bifunctor $n \mapsto \Hom_{\cP\ten R}(-, -\ten \Omega^{\bt}(\Delta^n))$ sends mod-quasi-isomorphisms in either input to weak equivalences, provided we restrict to mod-cofibrant objects on the left. 
The result now follows from \cite{DKEquivsHtpyDiagrams} by an identical argument to \cite[Corollary \ref{DQpoisson-Tatehomcor}]{DQpoisson}.
%
\end{proof}

\begin{definition}
Given $R \in dg\Art_k$ and $A \in \Alg_{\cP}(k)$, define the simplicial category of derived deformations of $A$ over $R$ to be the homotopy fibre of the simplicial functor
 \[
  \oL^{\cW} \Alg_{\cP}(R) \to \oL^{\cW} \Alg_{\cP}(k),
 \]
where $\oL^{\cW}$ denotes simplicial localisation at the class $\cW$ of quasi-isomorphisms.

Given $R \in \pro(dg\Art_k)$ and $A \in \Alg_{\cP}(k)$, define the simplicial category of contraderived deformations of $A$ over $R$ to be the homotopy fibre of the simplicial functor
 \[
  \oL^{\bar{\cW}} \Alg^{\ctr}_{\cP}(R) \to \oL^{\cW} \Alg_{\cP}(k),
 \]
where $\oL^{\bar{\cW}}$ denotes simplicial localisation at the class $\bar{\cW}$ of mod-quasi-isomorphisms.
\end{definition}

\begin{lemma}\label{2fibrnlemma}
 For any  small extension $R \to R/I$, the simplicial functor $\uline{\Alg}^{\ctr,c}_{\cP}(R) \to \uline{\Alg}^{\ctr,c}_{\cP}(R/I)$ is a 2-fibration of simplicial categories in the sense of   \cite[Definition 2.22]{dmsch}.
\end{lemma}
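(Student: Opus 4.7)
The plan is to verify the two conditions of \cite[Definition 2.22]{dmsch} for the functor $F\co \uline{\Alg}^{\ctr,c}_{\cP}(R)\to\uline{\Alg}^{\ctr,c}_{\cP}(R/I)$ to be a 2-fibration: namely, (i) $F$ induces a Kan fibration on each morphism simplicial set, and (ii) every equivalence in the target with source $F(A)$ lifts to an equivalence out of $A$ in the source.

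For (i), fix $A,A'\in\uline{\Alg}^{\ctr,c}_{\cP}(R)$. A horn-lifting problem consists of a $\cP\ten R$-algebra map $\psi\co A\to A'\ten\Omega^{\bt}(\Lambda^n_i)$ together with a $\cP\ten(R/I)$-algebra map $\bar\phi\co A/IA\to (A'/IA')\ten\Omega^{\bt}(\Delta^n)$ whose restriction and reduction agree. Applying Lemma \ref{defHomlemma} with the pair $(R\ten\Omega^{\bt}(\Delta^n),\,I\ten\Omega^{\bt}(\Delta^n))$ in place of $(R,I)$, the obstruction to lifting $\bar\phi$ to $\Hom_{\cP\ten R}(A, A'\ten\Omega^{\bt}(\Delta^n))$ lies in $\H^1\DDer_{\cP}(\bar A, \bar A'\ten_k I\ten\Omega^{\bt}(\Delta^n))$. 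Since $\Omega^{\bt}(\Delta^n)$ and $\Omega^{\bt}(\Lambda^n_i)$ are both quasi-isomorphic to $k$ (by contractibility of $\Delta^n$ and $\Lambda^n_i$), restriction induces an isomorphism on these cohomology groups; the existence of $\psi$ kills the restricted class, so the obstruction itself vanishes and a lift $\phi_0$ of $\bar\phi$ exists. Correcting $\phi_0$ to agree with $\psi$ on $\Lambda^n_i$ then amounts to filling a horn in the simplicial abelian group $n\mapsto\z^0\bigl(\DDer_{\cP}(\bar A,\bar A'\ten_k I)\ten\Omega^{\bt}(\Delta^n)\bigr)$, which is automatic since every simplicial abelian group is Kan.

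For (ii), take $A\in\uline{\Alg}^{\ctr,c}_{\cP}(R)$ and an equivalence $\phi\co A/IA\to B$ in $\uline{\Alg}^{\ctr,c}_{\cP}(R/I)$. By Proposition \ref{AlgPlocprop}, $\phi$ is represented up to simplicial homotopy by a mod-quasi-isomorphism in $\Alg^{\ctr}_{\cP}(R/I)$; factorisation in the model structure of \cite[Theorem 4.1.1]{hinichHtpyAlg} lets us assume, after replacing $B$ within its equivalence class, that this is a trivial mod-cofibration $A/IA\hookrightarrow (A/IA)\langle X\rangle$ where $X$ is a contractible graded $\cP$-module freely adjoined. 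The corresponding free extension $\tilde B := A\langle X\rangle$ lifts $(A/IA)\langle X\rangle$ at the graded level, and lifting the differential on $X$ via Lemma \ref{defOblemma}---whose $\H^2\DDer$ obstruction vanishes by contractibility of $X$---produces $\tilde B\in\uline{\Alg}^{\ctr,c}_{\cP}(R)$ together with a mod-quasi-isomorphism $A\hookrightarrow\tilde B$ covering $\phi$ (appealing to (i) to reconcile with the original $B$ up to equivalence where necessary).

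The main obstacle is (i), specifically combining Lemma \ref{defHomlemma}'s strict obstruction theory with the simplicial structure arising from polynomial de Rham forms; the key observation is that contractibility of $\Delta^n$ and its horns renders all $\Omega^{\bt}$-dependent obstructions cohomologically independent of the simplicial parameter, leaving only the automatic Kan property of simplicial abelian groups to handle the final horn-filling step.
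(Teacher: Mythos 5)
Your verification of the Kan condition on morphism spaces is essentially the paper's argument: the paper phrases it as applying Lemma \ref{defHomlemma} to the partial matching maps, whose kernels are of the form $\bar{A}'\ten_k I\ten \ker(\Omega^{\bt}(\Delta^n)\to\Omega^{\bt}(\Lambda^n_i))$ and hence acyclic, while your two-step version (kill the obstruction by comparing with the horn, then correct the lift using surjectivity on $\z^0$) amounts to the same computation. That part is fine, modulo the cosmetic point that $R\ten\Omega^{\bt}(\Delta^n)$ is not Artinian and the extension $R\ten\Omega^{\bt}(\Delta^n)\to (R/I)\ten\Omega^{\bt}(\Delta^n)$ is not small in the sense of the paper, so one should say ``the same argument as Lemma \ref{defHomlemma}'' rather than invoke it verbatim.

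The isofibration step, however, has a genuine gap. What must be shown is that the given equivalence $\phi\co A/IA\to B$ lifts to an equivalence $A\to\tilde{B}$ whose target reduces to $B$ \emph{itself}. Your construction replaces $B$ by the free extension $(A/IA)\langle X\rangle$ and lifts that instead; the resulting $A\langle X\rangle$ reduces to $(A/IA)\langle X\rangle$, not to $B$. The closing appeal to (i) to ``reconcile with the original $B$'' cannot repair this: (i) only concerns morphisms between \emph{fixed} objects, whereas what you need is exactly the isofibration property again, now applied to the trivial fibration $(A/IA)\langle X\rangle\to B$, so the argument is circular. The real difficulty is that lifting $B$ as an object and lifting $\phi$ as a morphism are each separately obstructed in general (in $\H^2\DDer_{\cP}(\bar{B},\bar{B}\ten_k I)$ and $\H^1\DDer_{\cP}(\bar{A},\bar{B}\ten_k I)$ respectively); the paper lifts the pair simultaneously, the point being that the \emph{combined} obstruction lies in $\H^1\DDer_{\cP}(\bar{A},\cone(\bar{A}\to\bar{B}))$, which vanishes precisely because $\phi$ is a mod-quasi-isomorphism and the cone is acyclic. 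This simultaneous-lifting device is what your proposal is missing. (A smaller inaccuracy: the obstruction to extending the differential over the freely adjoined generators is not the $\H^2\DDer$ of Lemma \ref{defOblemma} for the whole algebra, which contractibility of $X$ does not kill, but the relative group $\H^2\HHom_k(X,\bar{B}\ten_k I)$, which it does.)
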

\begin{proof}
This follows by exactly the same argument as \cite[Lemma 4.2.1]{hinichDefsHtpyAlg}. Firstly, given $C,A \in \Alg^{\ctr,c}_{\cP}(R)$, we can see that the map
\[
 \uline{\Alg}^{\ctr,c}_{\cP}(R)(C,A) \to  \uline{\Alg}^{\ctr,c}_{\cP}(R/I)(C/IC,A/IC)
\]
is a Kan fibration of simplicial sets by applying Lemma \ref{defHomlemma} to its partial matching maps.

Secondly, to see that $\pi_0\uline{\Alg}^{\ctr,c}_{\cP}(R) \to \pi_0\uline{\Alg}^{\ctr,c}_{\cP}(R/I)$ is an isofibration of categories, note that Proposition \ref{AlgPlocprop} implies that homotopy equivalences in $\uline{\Alg}^{\ctr,c}_{\cP}(R)$ are precisely mod-quasi-isomorphisms. Given $C \in \uline{\Alg}^{\ctr,c}_{\cP}(R)$ and a mod-quasi-isomorphism $f \co C/IC \to A$ in $\uline{\Alg}^{\ctr,c}_{\cP}(R/I)$, we thus seek a lift $\tilde{f} \co C \to \tilde{A}$.  Combining Lemmas \ref{defHomlemma} and \ref{defOblemma}, we see that the potential obstruction to such a lift lies in 
\[
 \H^1\DDer_{\cP}(\bar{C}, \cone(\bar{C} \to \bar{A})),
\]
for $\bar{C}:= C\ten_Rk$ and $\bar{A}:= A\ten_{R/I}k$. Since $f$ is a mod-quasi-isomorphism, the cone is acyclic and the obstruction vanishes.
\end{proof}

\begin{theorem}\label{cofalgdefthm}
Given a $k$-linear dg operad $\cP$ and a $\cP$-algebra $A$ with cofibrant replacement $A'$, the derived Deligne groupoid $\DDel(\DDer_{\cP}(A',A'), R)$ is canonically quasi-equivalent to the simplicial category of contraderived deformations of $A$ over $R$, for all $R \in dg\Art_k$.
\end{theorem}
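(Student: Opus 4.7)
The plan is to construct an explicit comparison functor from $\DDel(\DDer_{\cP}(A',A'), R)$ to the simplicial category of contraderived deformations of $A$ over $R$, and verify it is a quasi-equivalence by induction up the tower of small extensions of $R$, matching obstruction-theoretic descriptions on both sides. I would replace the target by the fibre of $\uline{\Alg}^{\ctr,c}_{\cP}(R)$ over $A'$ using Proposition \ref{AlgPlocprop}, which is legitimate since every contraderived deformation of $A$ is mod-quasi-isomorphic to one of $A'$.

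First I would send a Maurer--Cartan element $\omega \in \mc(\DDer_{\cP}(A',A') \hten \m(R))$ to the deformation $A'_{\omega}$ whose underlying graded $R_{\#}$-module is $A'_{\#}\hten R_{\#}$, equipped with the $R_{\#}$-linearly extended graded $\cP_{\#}$-algebra structure and with differential $d_{A'} + \omega$; the Maurer--Cartan equation is exactly the statement that $(d_{A'}+\omega)^2=0$, and the derivation condition on $\omega$ ensures compatibility with the $\cP$-algebra structure. Gauge elements $g \in \Gg(\DDer_{\cP}(A',A') \hten \m(R))$ then give graded $\cP_{\#}\ten R_{\#}$-algebra automorphisms of $A'_{\#}\hten R_{\#}$ intertwining $d_{A'}+\omega$ with $d_{A'}+(g\star\omega)$. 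Applying the same recipe with $\DDer_{\cP}(A',A') \ten \Omega^{\bt}(\Delta^n)$ replacing $\DDer_{\cP}(A',A')$ throughout yields the simplicial enhancement, which on the target side is identified with the simplicial Hom of Proposition \ref{AlgPlocprop} via the canonical isomorphism $\Hom_{\cP\ten R\ten\Omega^{\bt}(\Delta^n)}(A'_{\omega}\ten\Omega^{\bt}(\Delta^n), A'_{\omega'}\ten\Omega^{\bt}(\Delta^n)) \cong \Hom_{\cP\ten R}(A'_{\omega}, A'_{\omega'}\ten\Omega^{\bt}(\Delta^n))$.

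Second, to show this is a quasi-equivalence, I would work inductively up the tower $R/\m(R)^{n+1} \to R/\m(R)^{n}$ of small extensions. Lemmas \ref{defHomlemma} and \ref{defOblemma} describe the fibres of the object sets and mapping spaces of $\uline{\Alg}^{\ctr,c}_{\cP}(-)$ along each small extension $R \to R/I$ as a $\z^1$-torsor of lifts with $\H^2$-valued obstruction, together with $\z^0$- and $\H^1$-controlled identifications on morphisms, all in $\DDer_{\cP}(A', A'\ten_k I)$. On the DGLA side, the analogous fibre descriptions for $\mc$ and $\Gg$ of $L\hten\m(R)$ along small extensions follow formally from the Maurer--Cartan and gauge equations, producing exactly the same cocycle classes and obstructions. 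The proposed functor identifies the two obstruction sequences compatibly, so essential surjectivity on $\pi_0$ and full faithfulness on mapping spaces both follow by induction from the trivial base case $R=k$.

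The essential-surjectivity step relies on the observation, using cofibrancy of $A'$, that every mod-cofibrant deformation $C \in \Alg^{\ctr,c}_{\cP}(R)$ admits a graded $\cP_{\#}\ten R_{\#}$-algebra isomorphism $C_{\#} \cong A'_{\#}\hten R_{\#}$ lifting the identity of $A'_{\#}$, constructed inductively along the tower via the retract data making $A'_{\#}$ a summand of a free $\cP_{\#}$-algebra; once this is done, the transported differential is automatically of the form $d_{A'}+\omega$ with $\omega$ satisfying Maurer--Cartan. The main obstacle I expect is coherence: ensuring that these levelwise constructions assemble into a genuine simplicial functor and quasi-inverse, rather than merely giving fibrewise bijections that a priori depend on choices. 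This is handled by Lemma \ref{2fibrnlemma}, whose 2-fibration conclusion lets one upgrade the inductive fibrewise comparisons into a global quasi-equivalence of simplicial categories.
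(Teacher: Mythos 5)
Your proposal is correct and follows essentially the same route as the paper: Proposition \ref{AlgPlocprop} together with the 2-fibration Lemma \ref{2fibrnlemma} reduce the homotopy fibre to the strict categorical 2-fibre over $A'$, whose objects and morphisms are then identified with Maurer--Cartan and gauge elements via the graded trivialisation $C_{\#} \cong A'_{\#}\ten R_{\#}$. The only notable difference is that the paper treats this last identification as a direct isomorphism of simplicial categories, so your additional obstruction-theoretic induction matching the two sides along the tower of small extensions, while valid, is not needed.
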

\begin{proof}
Proposition \ref{AlgPlocprop} allows us to replace $\oL^{\bar{\cW}} \Alg^{\ctr}_{\cP}(R)$ with $\uline{\Alg}^{\ctr,c}_{\cP}(R)$ and $\oL^{\cW} \Alg^{\ctr}_{\cP}(k)$ with $\uline{\Alg}^{c}_{\cP}(k)$, also replacing $A$ with $A'$.
 
 Considering the tower of small extensions $R/\m(R)^{n+1} \to R/\m(R)^n$, it follows from Lemma \ref{2fibrnlemma} that the simplicial functor $\uline{\Alg}^{\ctr,c}_{\cP}(R) \to \uline{\Alg}^{c}_{\cP}(k)$ is a 2-fibration of simplicial categories, so the homotopy fibre over $A'$ is simply given by the categorical 2-fibre, since that corresponds to the fibre over a fibration in the model structure of \cite[Theorem 1.1]{bergner}.
 
 In other words, our simplicial category of interest consists of pairs $(\tilde{A}, \theta)$ with  $\tilde{A} \in \Alg^{\ctr,c}_{\cP}(R)$ and $\theta \co \tilde{A}\ten_Rk \to A'$ a fixed isomorphism, together with the obvious simplicial morphisms. 
 Any such object  is isomorphic to one of the form $(A'\ten R, \tilde{d})$ for a square-zero differential $\tilde{d}$ of chain degree $-1$ with $\tilde{d} \equiv d_{A'} \mod \m(R)$. Equivalently,  $\tilde{d} = d_{A'} +\omega$ with $\omega \in \mc(\DDer_{\cP}(A',A')\ten\m(R))$. The simplicial set of morphisms between these which map to the identity in   $\uline{\Alg}^{\ctr,c}_{\cP}(k)$ is precisely that of Definition \ref{ddeldef}.
\end{proof}

\begin{remarks}\label{1proxrmk1}
 Since $\cD_{dg}(R)$ is a full dg subcategory of  $\cD_{dg}^{\ctr}(R)$, Theorem \ref{cofalgdefthm} implies that the functor of contraderived deformations is  universal among the functors governed by DGLAs and lying  under the functor of derived deformations. In the problematic terminology of \cite[Definition 5.1.5]{lurieDAG10}, this says contraderived deformations are the formal moduli problem associated to the $1$-proximate formal moduli problem of derived deformations.
 
 Note that, as is usually the case with derived deformation problems, it was easier for us just to construct the DGLA and prove the equivalence than it would have been to apply \cite[Theorem 4.14 and Corollary 4.57]{ddt1} (later recovered as \cite[Theorem 0.0.13]{lurieDAG10}) to infer existence of a governing DGLA indirectly.
\end{remarks}

The following slightly generalises \cite[Theorem 2.3.4]{hinichDefsHtpyAlg}, which requires both $\cP$ and $A$ to be concentrated in non-negative chain degrees.
\begin{corollary}\label{cofalgdefcor}
 Given a $k$-linear dg operad $\cP$ and a $\cP$-algebra $A$ satisfying $\H_iA=0$ for all $i \ll 0$,
  with cofibrant replacement $A'$, the derived Deligne groupoid $\DDel(\DDer_{\cP}(A',A'), R)$ is canonically quasi-equivalent to the simplicial category of derived deformations of $A$ over $R$, for all $R \in dg_+\Art_k$.
\end{corollary}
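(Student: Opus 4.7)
The plan is to reduce to Theorem \ref{cofalgdefthm} by showing that, under these connectivity hypotheses, the simplicial categories of contraderived and derived deformations of $A$ over $R$ coincide. Via the fully faithful monoidal inclusion $\cD_{dg}(R) \hookrightarrow \cD^{\ctr}_{dg}(R)$, there is a natural functor from derived to contraderived deformations, and the task is to show it is a quasi-equivalence.

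For essential surjectivity, take any contraderived deformation $(\tilde{A}, \theta)$ with $\theta \co \tilde{A} \ten_R k \xra{\sim} A$. The reduction $\tilde{A} \ten_R k$ is then quasi-isomorphic to $A$, so has homology bounded below, and Lemma \ref{connectivitylemma} forces the underlying $R$-module of $\tilde{A}$ to lie in $\cD_{dg}(R)$. Hence $\tilde{A}$ already defines a derived deformation.

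For fully faithfulness, the proof of Proposition \ref{AlgPlocprop} adapts verbatim to the derived setting, using the projective model structure of the first kind in place of that of the second kind, with the obstruction theory of Lemmas \ref{defHomlemma} and \ref{defOblemma} applying identically. This realises $\oL^{\cW}\Alg_{\cP}(R)$ as a simplicial category $\uline{\Alg}^{c}_{\cP}(R)$ with morphism complexes $\Hom_{\cP \ten R}(A, A' \ten \Omega^{\bt}(\Delta^n))$, which are formally identical to those of $\uline{\Alg}^{\ctr,c}_{\cP}(R)$ between common objects. A finite-filtration spectral sequence, available because $R$ is Artinian so $\m(R)^N = 0$ for some $N$, further shows that the associated graded of the $\m(R)$-adic filtration computes homology as $\H_*(\gr \m(R)) \ten_k \H_*(\bar{M})$ by K\"unneth, and hence that mod-quasi-isomorphisms and quasi-isomorphisms between objects of $\Alg_{\cP}(R)$ coincide. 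The two notions of weak equivalence therefore match on the common sub-collection, and combining with essential surjectivity yields the desired quasi-equivalence.

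The main potential obstacle is the verification that the cosimplicial frames of Proposition \ref{AlgPlocprop} can indeed be built inside $\cD_{dg}(R)$ rather than the larger $\cD^{\ctr}_{dg}(R)$. This is automatic in the bounded-below setting: in the inductive construction climbing the tower $R/\m(R)^{n+1} \to R/\m(R)^n$, the reduction $\bar{A}$ has bounded-below homology throughout, so Lemma \ref{connectivitylemma} ensures that every successive extension remains within $\cD_{dg}(R)$, and no other step of the argument is sensitive to the choice between the model structures of the first and second kind.
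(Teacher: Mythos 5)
Your argument follows the paper's proof essentially verbatim: both reduce to Theorem \ref{cofalgdefthm} by using Lemma \ref{connectivitylemma} to show that every contraderived deformation of a homologically bounded-below algebra already lies in $\cD_{dg}(R)$, and by observing that quasi-isomorphisms and mod-quasi-isomorphisms coincide on $\Alg_{\cP}(R)$, so that derived deformations form a full $\infty$-subcategory exhausting everything over $A$. The paper states this in three sentences; your extra details (the adapted cosimplicial frames and the $\m(R)$-adic filtration argument) are correct elaborations of what it leaves implicit.
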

\begin{proof}
In $\cD^{\ten}_{dg}(R)$, quasi-isomorphisms are precisely mod-quasi-isomorphisms, so $\oL^{\cW} \Alg_{\cP}(R)$ is a full $\infty$-subcategory of $\oL^{\bar{\cW}} \Alg^{\ctr}_{\cP}(R)$.
 Lemma \ref{connectivitylemma} implies that it contains all objects lying over $A \in \oL^{\cW}\Alg_{\cP}(k)$, so the map from the simplicial category of derived deformations of $A$ to that of contraderived deformations is a quasi-equivalence in this case. The result then follows immediately from Theorem \ref{cofalgdefthm}.
\end{proof}

\section{Deformations as mapping spaces of coloured dg (pr)operads}\label{propsn}

\subsection{Twisting and the hat construction} 
 
 
 For a coloured dg (pr)operad $\cP$, we have a dg associative algebra $\cP(x;x)$ for every colour $x \in \Ob \cP$, with multiplication given by composition.  The following generalises \cite[5.1]{ChuangLazarevComb} to incorporate multiple objects and allow for properads.
 \begin{definition}
  Given a coloured dg (pr)operad or dg category $\cP$ and a collection of Maurer--Cartan elements $\uline{\omega}= \{\omega_x \in \mc(\cP(x;x))\}_{x \in \Ob \cP}$, define the \emph{twist} $\cP^{\uline{\omega}}$ of $\cP$ by $\uline{\omega}$ to be given by $\cP$ as a coloured graded (pr)operad or category, but with differential $d^{\uline{\omega}}$ given on $\cP(x_1, \ldots, x_r;y_1, \ldots, y_s)$ by
  \[
 d^{\uline{\omega}}(a):=  d a + \sum_{j=1}^s\omega_{y_j} \circ_j a - (-1)^{\deg a} \sum_{i=1}^r a \circ_i \omega_{x_i}
  \]
\end{definition}

 \begin{definition}
  Given a coloured dg (pr)operad or dg category $\cQ$, a set $S$ and a function $g \co S \to \Ob \cQ$, define $g^{-1}\cQ$ to be the dg (pr)operad or dg category on colours $S$ with multimorphisms $(g^{-1}\cQ)(s_1, \ldots, s_m;t_1, \ldots, t_n):= \cQ(g(s_1), \ldots, g(s_m);g(t_1), \ldots, g(t_n))$.
   \end{definition}

 The following definition generalises \cite{ChuangLazarevComb} to coloured dg (pr)operads.
\begin{definition}
Define the endofunctor $\cP \leadsto \cP\<m\>$ of the category of coloured dg (pr)operads or of dg categories by freely adjoining elements $m_x \in \mc(\cP\<m\>(x;x))$ for all $x \in \Ob \cP$.

Then define the endofunctor $\cP \leadsto \widehat{\cP}$ by setting $\widehat{\cP}$ to be the twist $\cP\<m\>^m$.
 \end{definition}
 
 Thus giving a morphism $\cP\<m\> \to \cQ$ of coloured dg (pr)operads amounts to giving a morphism $f \co \cP \to \cQ$ together with elements $\omega_x \in \mc(\cQ(f(x);f(x)))$ for all $x \in \Ob \cP$.
 
 Likewise, a morphism $\widehat{\cP} \to \cQ$ amounts to giving a morphism $f_0 \co \Ob \cP \to \Ob \cQ$, Maurer--Cartan elements $\omega_x \in \mc(\cQ(f_0(x);f_0(x)))$ for all $x \in \Ob \cP$ and a morphism $\cP \to (f_0^{-1}\cQ)^{\uline{\omega}}$ of coloured dg (pr)operads on fixed colours $\Ob \cP$.
 
 \begin{definition}
  Define  the endofunctor $\cP \leadsto \MC\cP$ of the category of coloured dg (pr)operads or of dg categories to be the right adjoint to $\widehat{(-)}$. 
  
  Explicitly, colours of $  \MC\cP$ are pairs $(x, \omega)$ with $x \in \Ob \cP$ and $\omega \in \mc(\cP(x;x))$. For the projection map $\pr_1 \co \Ob \MC\cP \to \Ob \cP$, the dg (pr)operad $  \MC\cP$ is then given by the twist $(\pr_1^{-1}\cP)^{\pr_2}$, where $\pr_2(x, \omega)=\omega$.
   \end{definition}

   \subsection{Relation to contraderived categories}
   
   \begin{definition}
   Given a $k$-linear coloured dg (pr)operad or dg category $\cQ$  and $R =\{R(i)\}_i \in \pro(dg\Art_k)$, define the coloured dg (pr)operad $\cQ\hten R$ to have the same colours as $\cQ$ and multimorphisms $(\cQ\hten R)(x_1,\ldots,x_r;y_1, \ldots, y_s)= \Lim_i  \cQ(x_1,\ldots,x_r;y_1, \ldots, y_s)\ten R(i)$, with the obvious compositions.
   \end{definition}

   \begin{proposition}\label{Dctrcheckprop}
  For $R\in \pro(dg\Art_k)$, the coloured dg operad $\cD^{\ctr,\ten}_{dg}(R)$ (resp. coloured dg properad $\cD^{\ctr,\ten,\ten}_{dg}(R)$, resp. dg category $\cD^{\ctr}_{dg}(R)$)  of contraderived projective modules is canonically equivalent to the fibre product of the diagram
  \begin{align*}
   \MC(\cD^{\ten}_{dg}(k)\hten R)\to \MC(\cD_{dg}^{\ten}(k)) \la \cD_{dg}^{\ten}(k), \text{ resp.}\\
   \MC(\cD^{\ten,\ten}_{dg}(k)\hten R)\to \MC(\cD_{dg}^{\ten,\ten}(k)) \la \cD_{dg}^{\ten,\ten}(k), \text{ resp.}\\
    \MC(\cD_{dg}(k)\hten R)\to \MC(\cD_{dg}(k)) \la \cD_{dg}(k).
  \end{align*}
\end{proposition}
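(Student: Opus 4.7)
The plan is to construct an explicit quasi-equivalence between $\cD^{\ctr,\ten}_{dg}(R)$ and the stated fibre product, by exhibiting every quasi-free $R$-module as the extension of its mod-$\m(R)$ reduction by a Maurer--Cartan perturbation. I first unpack the fibre product: an object of $\MC(\cD^{\ten}_{dg}(k)\hten R)$ is a pair $(V,\omega)$ with $V\in \cD_{dg}(k)$ and $\omega\in \mc(\HHom_k(V,V)\hten R)$, while the map $\cD^{\ten}_{dg}(k)\to \MC(\cD^{\ten}_{dg}(k))$ sends $V$ to $(V,0)$; so the fibre product has objects $(V,\omega)$ with $\omega\in \mc(\HHom_k(V,V)\hten \m(R))$, with multimorphism complexes $\HHom_k(V_1\ten\ldots\ten V_r,W)\hten R$ (or the properadic variant) equipped with the differential twisted by the $\omega_{V_i}$ and $\omega_W$.

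Next, I define the comparison functor $\Phi$ from the fibre product to $\cD^{\ctr,\ten}_{dg}(R)$ by sending $(V,\omega)$ to the quasi-free module $R\hten V$ with differential $D_0+\omega$, where $D_0:=d_R\ten\id+\id\ten d_V$ is the canonical $d_R$-derivation and $\omega$ is viewed as an $R$-linear operator of chain degree $-1$. The equation $(D_0+\omega)^2=0$ translates exactly to the Maurer--Cartan equation $d\omega+\omega^2=0$ in the associative algebra $\HHom_k(V,V)\hten R$ (using $\half[\omega,\omega]=\omega^2$ for degree-one elements), so $\Phi$ is well-defined on objects. Full faithfulness on multimorphism complexes is immediate, since both sides compute the same graded hom space $\HHom_k(V_1\ten\ldots\ten V_r,W)\hten R$, and the twisted differential on the fibre product side matches the differential on $\HHom_R(M_1\hten_R\ldots\hten_R M_r,N)$ induced by the contraderived differentials on the $M_i=R\hten V_i$ and $N=R\hten W$.

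For essential surjectivity, given $M\in \cD^{\ctr}_{dg}(R)$ with $M_{\#}\cong R_{\#}\hten V_{\#}$, I set $V:=M\ten_Rk$ (a chain complex, automatically cofibrant since $k$ is a field) and choose any $R$-linear graded isomorphism $R\hten V\cong M$ lifting the identity on the mod reduction; this is possible by freeness of $M_{\#}$. Transporting $d_M$ along this isomorphism, the difference $\omega:=d_M-D_0$ is a difference of $d_R$-derivations on $R\hten V$, hence $R$-linear, and its mod-$\m(R)$ reduction vanishes by the defining property of $V$, so that $M\cong \Phi(V,\omega)$. The properad and dg category versions follow from the identical argument with single outputs replaced by tuples of outputs, or with both arities set to one.

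The main subtlety is the pro-Artinian case: when $R=\{R(i)\}_i$, one needs to verify that the decomposition $d_M=D_0+\omega$ assembles coherently across the tower $\{R(i)\}$ so that $\omega$ lies in the completed tensor product $\HHom_k(V,V)\hten R$, but this is automatic from the $R$-linearity of $\omega$ once a lift $R\hten V\cong M$ has been fixed at the level of graded objects. Beyond this bookkeeping and the usual Koszul sign conventions in the operadic twist formulas, the argument is essentially tautological once the correct viewpoint is adopted.
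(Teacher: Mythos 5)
Your proposal is correct and follows essentially the same route as the paper: the paper likewise identifies colours of the fibre product with pairs $(V,\omega)$, $\omega\in\mc(\EEnd_k(V)\hten\m(R))$, sends such a pair to the twisted quasi-free module $(V\ten R, d+\omega)$, observes that fullness of the right-hand leg makes the multimorphism complexes agree with the twisted convolution complexes, and obtains essential surjectivity by splitting an arbitrary quasi-free module as its reduction plus a Maurer--Cartan perturbation. The only cosmetic difference is that the paper delegates the essential-surjectivity step to the analogous argument in the proof of Theorem \ref{cofalgdefthm}, whereas you spell it out directly.
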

\begin{proof}
 A colour of the fibre product is a pair $(V, \omega)$ with $V \in \cD_{dg}(k)$ and 
 \[
 \omega \in \mc(\ker(\EEnd_k(V)\hten R \to \EEnd_k(V))) \cong \mc(\EEnd_k(V)\hten \m(R)).
 \]
 To such a pair, we thus associate the object $M(V,\omega) := (V\ten R, d +\omega)$ of $\cD^{\ctr}_{dg}(R)$, with the Maurer--Cartan equation ensuring closure of the differential. 
 
Since the right-hand map in the fibre product diagram is fully faithful, the complex of multimorphisms in the fibre product from $((V_1, \omega_1), \ldots, (V_r, \omega_r))$ to $((W, \nu_1), \ldots, (W_s, \nu_s))$ is just given by the left-hand term, i.e.
\[
(\HHom_k(V_1\ten \ldots \ten V_r,W_1\ten\ldots \ten W_s)\hten R, d^{\omega, \nu}).
\]
This is canonically isomorphic (compatibly with compositions) to 
\[
\HHom_R(M(V_1,\omega_1)\hten_R \ldots \hten_R M(V_r, \omega_r),M(W_1, \nu_1) \hten_R \ldots \hten_R M(W_s, \nu_s)),
 \]
giving us a fully faithful dg multifunctor $M(-)$. As in the proof of Theorem \ref{cofalgdefthm}, it follows easily from the definitions that
 every projective contramodule is isomorphic to one of the form $M(V, \omega)$, giving essential surjectivity of $M(-)$.
\end{proof}

 In fact, more is true: for the  dg category $\cD^{\ten}_g(k)$ of graded $k$-vector spaces (with $d=0$), we have $\cD^{\ctr}_{dg}(R) \simeq \MC(\cD_{g}(k)\hten R)$, and similarly for the associated coloured dg (pr)operads.

 \subsection{Deforming morphisms of coloured dg (pr)operads}
 
 Turning Proposition \ref{Dctrcheckprop} into a definition and generalising gives the following.
 \begin{definition}
  Given a $k$-linear coloured dg (pr)operad or dg category $\cQ$ and $R \in \pro(\Art_k)$, define the coloured dg (pr)operad or dg category of $R$-linear contraderived deformations of $\cQ$ by
 \[
 \cQ^{\ctr}(R) :=  \MC(\cQ\hten R)\by_{  \MC\cQ}\cQ.
 \]
  \end{definition}

 The following is an immediate consequence of the adjunction $\widehat{(-)} \dashv \MC(-)$:
 \begin{lemma}\label{ctrchecklemma}
 In the category of small $k$-linear coloured dg (pr)operads or of small dg categories, 
 we have
 \[
  \Hom(\cP,\cQ^{\ctr}(R)) \cong  \Hom(\widehat{\cP},\cQ\hten R)\by_{\Hom(\widehat{\cP},\cQ)}\Hom(\cP,\cQ). 
 \]
 \end{lemma}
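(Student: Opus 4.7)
The plan is to use nothing more than the universal property of the defining pullback together with the stated adjunction $\widehat{(-)} \dashv \MC(-)$.

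First, I would unwind the definition of $\cQ^{\ctr}(R)$. Since it is the fibre product $\MC(\cQ\hten R)\by_{\MC\cQ}\cQ$ in the category of small $k$-linear coloured dg (pr)operads (or dg categories), the representable functor $\Hom(\cP,-)$ preserves this limit, giving a canonical bijection
\[
\Hom(\cP,\cQ^{\ctr}(R)) \;\cong\; \Hom(\cP,\MC(\cQ\hten R))\by_{\Hom(\cP,\MC\cQ)}\Hom(\cP,\cQ).
\]

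Next, I would apply the adjunction $\widehat{(-)} \dashv \MC(-)$ separately to the first factor and to the base of the fibre product. This gives natural bijections
\[
\Hom(\cP,\MC(\cQ\hten R)) \;\cong\; \Hom(\widehat{\cP},\cQ\hten R), \qquad \Hom(\cP,\MC\cQ) \;\cong\; \Hom(\widehat{\cP},\cQ),
\]
compatibly with the structure maps (which are induced by the counit of the adjunction applied to $\cQ\hten R\to\cQ$). Substituting these into the previous display yields the claimed identification.

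Since the argument is just a formal manipulation with the limit-preservation property of $\Hom(\cP,-)$ and the naturality of the adjunction unit/counit, there is no real obstacle: the only thing to verify is that the right-hand map $\Hom(\cP,\cQ)\to\Hom(\cP,\MC\cQ)\cong\Hom(\widehat{\cP},\cQ)$ appearing in the fibre product is indeed the one induced by the canonical morphism $\widehat{\cP}\to\cP$ (which comes from the counit $\widehat{(-)}\circ \MC(-)\Rightarrow\id$ after identifying $\cP$ with $\MC$ of the trivial twist). This compatibility is built into the construction of $\widehat{(-)}$ and $\MC(-)$, so the verification is immediate.
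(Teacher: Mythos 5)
Your argument is correct and is exactly what the paper intends: it states the lemma as an immediate consequence of the adjunction $\widehat{(-)} \dashv \MC(-)$, and your proof simply spells out the two formal steps (limit preservation by $\Hom(\cP,-)$ applied to the defining pullback, then the adjunction on the two $\MC$-factors, with the base map $\Hom(\cP,\cQ)\to\Hom(\widehat{\cP},\cQ)$ being precomposition with the canonical $\widehat{\cP}\to\cP$ adjoint to $x\mapsto(x,0)$).
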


 \begin{definition}
  Given a co-augmented dg coproperad $\C= \bar{\C} \oplus I$, define the reduced cobar construction $\bar{\Omega}(\C)$ to be the dg properad $\Omega(\bar{\C})$ of \cite[\S 3.6]{MerkulovValletteDefThRepsProps}, given by the free graded properad generated by the desuspension of $\bar{\C}$, with differential combining that on $\bar{\C}$ with its partial coproduct.
 \end{definition}
Beware that $\bar{\Omega}(\C)$ itself is also denoted $\Omega(\C)$ in \cite[\S 3.6]{MerkulovValletteDefThRepsProps}, but that would more logically mean $\bar{\Omega}(\C \oplus I)$, where $-\oplus I$ denotes the formal addition of a co-identity. We cannot afford to confuse the constructions for reasons which will become immediately apparent. 
 
 The following straightforward observation generalises \cite[Remark 5.4]{ChuangLazarevComb} to properads.
 \begin{lemma}\label{hatOmegalemma}
 There is a canonical isomorphism $\widehat{\bar{\Omega}(\C)}\cong \bar{\Omega}(\C \oplus I)$ of dg properads for any co-augmented dg coproperad $\C$.
\end{lemma}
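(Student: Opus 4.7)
My plan is to construct the isomorphism $\Phi$ by matching generators and then verifying compatibility with the differentials.

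Both sides are free graded $k$-linear properads on a naturally identified generating set. The right-hand side $\bar\Omega(\C \oplus I)$ is, by the conventions just introduced, the cobar $\Omega(\C)$ of the full co-augmented coproperad $\C = \bar\C \oplus I$, and thus is free on the desuspension $s^{-1}\C = s^{-1}\bar\C \oplus \bigoplus_{x \in \Ob \C} k \cdot s^{-1} 1_x$. The left-hand side $\widehat{\bar\Omega(\C)} = \bar\Omega(\C)\<m\>^m$ is the twist by $m$ of the coproduct of $\bar\Omega(\C) = \Omega(\bar\C)$ with the properad freely generated by Maurer--Cartan elements $m_x$, so its underlying graded properad is free on $s^{-1}\bar\C$ together with $\{m_x\}_{x \in \Ob\C}$. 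I would define $\Phi$ on generators as the identity on $s^{-1}\bar\C$ and by $m_x \mapsto s^{-1} 1_x$, and extend it freely as a morphism of graded properads.

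The content of the proof is then to check that $\Phi$ intertwines the two differentials on each kind of generator; freeness yields a unique extension to a morphism of dg properads, and the symmetric construction gives the inverse. For $c \in \bar\C$, the cobar differential of $s^{-1} c$ inside $\Omega(\C)$ decomposes, via co-augmentation $\Delta c = 1 \otimes c + c \otimes 1 + \bar\Delta c$, into the internal $d_\C$-term, the $\bar\Delta$-term (which is precisely the cobar differential of $\bar\Omega(\C)$ itself), and the partial compositions with $s^{-1} 1_x$ coming from the $1 \otimes c$ and $c \otimes 1$ summands. On the left, the underlying $\bar\Omega(\C)$-differential already supplies the first two contributions, while the twist by $m$ adds exactly $\sum_j m_{y_j} \circ_j s^{-1}c - (-1)^{\deg s^{-1}c} \sum_i s^{-1}c \circ_i m_{x_i}$, which matches the third contribution under $\Phi$. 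For the generator $m_x$, the Maurer--Cartan relation $dm_x + m_x \circ m_x = 0$ imposed in $\bar\Omega(\C)\<m\>$ yields $d^m m_x = m_x \circ m_x$ after twisting, matching $d(s^{-1} 1_x) = s^{-1}1_x \circ s^{-1}1_x$ on the right, which is precisely the cobar differential determined by $\Delta 1_x = 1_x \otimes 1_x$.

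The main obstacle is sign bookkeeping: one must verify that the Koszul signs produced by the twist formula $d^{\uline{\omega}}(a) = da + \sum_j \omega_{y_j} \circ_j a - (-1)^{\deg a}\sum_i a \circ_i \omega_{x_i}$ coincide, under the desuspension convention for $s^{-1}$, with those the cobar differential produces from the $1\otimes c$ and $c \otimes 1$ terms of the partial coproduct. This is a routine but delicate check, after which $\Phi$ and its analogously constructed inverse assemble into the desired isomorphism of dg properads.
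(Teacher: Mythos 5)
Your proposal is correct and is precisely the verification the paper has in mind: the paper offers no proof, presenting the lemma as a ``straightforward observation'' generalising \cite[Remark 5.4]{ChuangLazarevComb}, and the intended argument is exactly your identification of free generators ($m_x \leftrightarrow s^{-1}1_x$) together with the check that the twist by $m$ reproduces the cobar differential's counit terms while the Maurer--Cartan relation matches $\Delta 1_x = 1_x \otimes 1_x$. The sign bookkeeping you flag is indeed the only delicate point, and your computation $d^m m_x = dm_x + 2\,m_x \circ m_x = m_x \circ m_x$ handles it correctly.
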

 
 \begin{definition}\label{DDergammadef}
  Take a co-augmented dg coproperad $\C=\bar{\C} \oplus I$, a coloured dg properad $\cQ$ and a  morphism $\gamma \co \bar{\Omega}\C \to \cQ$, with $\ast$  the unique colour of $\bar{\Omega}\C$. 
  
  Define the DGLA $\HHom^{\bS}(\bar{\C},\cQ|_{\gamma(\ast)})^{\gamma}$ to be the complex of  $\bS$-bimodule homomorphisms from $\C$ to  $\cQ|_{\gamma(\ast)}$, equipped with the  convolution Lie bracket of \cite[\S 2.4]{MerkulovValletteDefThRepsProps}, and with twisted differential $d +[\gamma,-]$, 
  
  Then define $\DDer_{\bar{\Omega}\C}(\gamma) :=  \HHom^{\bS}(\bar{\C} \oplus I,\cQ|_{\gamma(\ast)})^{\gamma}$, as considered in \cite[\S 8.2, Remark]{MerkulovValletteDefThRepsProps}
 \end{definition}
By  \cite[Theorem 69]{MerkulovValletteDefThRepsProps}, $\HHom^{\bS}(\bar{\C},\cQ|_{\gamma(\ast)})^{\gamma}$ is isomorphic to the complex  $\DDer(\bar{\Omega}\C, \cQ|_{\gamma(\ast)})$  of properad derivations from $\bar{\Omega}\C$ to  $\cQ|_{\gamma(\ast)}$, where the module structure on $\cQ|_{\gamma(\ast)}$ comes from $\gamma$. Thus by Lemma \ref{hatOmegalemma}, $\DDer_{\bar{\Omega}\C}(\gamma) $ is isomorphic to $\DDer(\widehat{\bar{\Omega}\C}, \cQ|_{\gamma(\ast)})$.

 \begin{remark}[Comparison of DGLAs]\label{compDGLArmk}
  Observe that when $\cQ= \cD_{dg}(k)^{\ten,\ten}$ and $\C$ is the bar construction of an augmented dg properad $\cP$, then $\gamma\co \bar{\Omega}\C \to  \cD_{dg}(k)^{\ten,\ten}$ corresponds to a $\cP_{\infty}$-algebra $A$ and $\DDer_{\bar{\Omega}\C}(\gamma)$ is isomorphic to the DGLA  of $\cP_{\infty}$ $\infty$-derivations of $A$ (in the terminology of \cite[\S 10.2.2]{LodayValletteOperads}). By contrast, $\DDer(\bar{\Omega}\C, \cQ)$  just consists of $\cP_{\infty}$ $\infty$-derivations whose linear term is $0$, corresponding to deformations fixing the underlying chain complex. 

If $\cP$ is a dg operad and $A$ is in fact a $\cP$-algebra, then our DGLA $\DDer_{\bar{\Omega}\C}(\gamma)$ is quasi-isomorphic to the DGLA $\DDer_{\cP}(\tilde{A},\tilde{A})$ of Theorem \ref{cofalgdefthm}. To see this, observe that the universal twisting morphism $\alpha \co \bar{\Omega}(\C) \to \cP$ gives  a bar-cobar adjunction $\Omega_{\alpha} \dashv \b_{\alpha}$ as in \cite[\S 11.2]{LodayValletteOperads}, with isomorphisms
\[
 \HHom^{\bS}(\C, \cD_{dg}(k)^{\ten}|_{\gamma(\ast)})^{\gamma} \cong   \HHom^{\bS}(\C, \EEnd^{\ten}_k(A))^{m_A} \cong 
 \Co\DDer_{\C}(\b_{\alpha}A,\b_{\alpha}A)
\]
of DGLAs. The functor $\Omega_{\alpha}$ gives a quasi-isomorphism from this to  $\DDer_{\cP}(\tilde{A}, \tilde{A})$, 
for the cofibrant replacement
$\tilde{A}:=\Omega_{\alpha}\b_{\alpha}A$  of $A$. 
  \end{remark}

As a consequence of Lemma \ref{hatOmegalemma} and the proof of \cite[Proposition 17]{MerkulovValletteDefThRepsProps}, we have:
\begin{lemma}\label{hatMClemma}
Given  a co-augmented dg coproperad $\C$ 
and a small coloured dg properad $\cQ$,  the set of dg properad morphisms from $\widehat{\bar{\Omega}(\C)}$ to $\cQ$ is isomorphic to 
\[
 \prod_{x \in \Ob \cQ}\mc(\HHom^{\bS}(\C,\cQ|_x)).
\] 
\end{lemma}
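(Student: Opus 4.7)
The plan is to combine Lemma \ref{hatOmegalemma} with the standard bijection between morphisms out of a reduced cobar construction and Maurer--Cartan elements in a convolution DGLA, adapted to incorporate a coloured target $\cQ$.

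First, I would use Lemma \ref{hatOmegalemma} to replace $\widehat{\bar{\Omega}(\C)}$ by $\bar{\Omega}(\C \oplus I)$. By the definition of the reduced cobar construction, this is the free graded monochrome properad on the desuspension of the non-counital part of $\C \oplus I$, which is by construction the whole of $\C = \bar{\C} \oplus I$, with differential combining the internal differential on $\C$ with its partial coproduct. Hence specifying a morphism $\bar{\Omega}(\C \oplus I) \to \cQ$ of coloured dg properads amounts to (a) choosing a colour $x = f(\ast) \in \Ob \cQ$ into which the unique colour $\ast$ of $\bar{\Omega}(\C \oplus I)$ is sent, and (b) giving a morphism of monochrome dg properads $\bar{\Omega}(\C \oplus I) \to \cQ|_x$. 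This already yields the product $\prod_{x \in \Ob \cQ}$ appearing in the statement.

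Second, for each fixed $x \in \Ob \cQ$, I would invoke the proof of \cite[Proposition 17]{MerkulovValletteDefThRepsProps}. Since $\bar{\Omega}(\C \oplus I)$ is free as a graded properad on $s^{-1}\C$, a morphism of graded properads to $\cQ|_x$ is precisely an $\bS$-bimodule map $\C \to \cQ|_x$ of the appropriate degree. Compatibility with the cobar differential then translates this data into a degree-$1$ element of $\HHom^{\bS}(\C, \cQ|_x)$ satisfying exactly the Maurer--Cartan equation with respect to the convolution bracket of \cite[\S 2.4]{MerkulovValletteDefThRepsProps}, giving the bijection
\[
\Hom(\bar{\Omega}(\C \oplus I), \cQ|_x) \cong \mc(\HHom^{\bS}(\C, \cQ|_x)).
\]

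The main potential obstacle is careful bookkeeping of how the co-augmentation interacts with the hat construction: the extra generator in $\bar{\Omega}(\C \oplus I) = \Omega(\C)$ coming from the co-identity $I \subset \C$ must be identified with Chuang--Lazarev's formal Maurer--Cartan generator $m_x$, while the restriction to $\bar{\C}$ recovers the usual twisting morphism data. Once this identification is made explicit via Lemma \ref{hatOmegalemma}, the remainder reduces to the standard Merkulov--Vallette argument applied separately to each colour of $\cQ$, and assembling the results over all $x \in \Ob \cQ$ completes the proof.
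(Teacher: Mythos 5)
Your proposal is correct and follows essentially the same route as the paper, which derives the lemma precisely by combining Lemma \ref{hatOmegalemma} with the proof of \cite[Proposition 17]{MerkulovValletteDefThRepsProps}, and your bookkeeping of why the formula features $\C$ rather than $\bar{\C}$ (the adjoined co-identity generator being identified with the formal Maurer--Cartan generator $m_x$) matches the remark the paper makes immediately after the statement. Nothing further is needed.
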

Note that the formula features $\C$, not $\bar{\C}$, since the source is $\widehat{\bar{\Omega}(\C)}$ instead of $\bar{\Omega}(\C)$. 

Combining Lemmas \ref{ctrchecklemma} and \ref{hatMClemma}, we have:
\begin{proposition}\label{checkMCprop}
 Given a co-augmented dg coproperad $\C$ and  a coloured dg properad $\cQ$ there is a canonical isomorphism
 \begin{align*}
  \Hom(\bar{\Omega}(\C),\cQ^{\ctr}(R)) &\cong \prod_{x \in \Ob \cQ} \mc(\HHom^{\bS}(\C,\cQ|_x)\hten R  )\by_{\mc(\HHom^{\bS}(\C,\cQ|_x))}\mc(\HHom^{\bS}(\bar{\C},\cQ|_x))\\
&\cong \prod_{\gamma \co \bar{\Omega}(\C) \to \cQ} \mc(\DDer_{\bar{\Omega}\C}(\gamma) \hten \m(R))  
  \end{align*} 
  for all $R \in \pro(dg\Art)$.
\end{proposition}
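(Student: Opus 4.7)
The plan is to assemble the two isomorphisms by feeding Lemma \ref{hatMClemma} into Lemma \ref{ctrchecklemma} and then unwinding the resulting Maurer--Cartan fibre product using the splitting $\C = \bar\C \oplus I$.

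First I would apply Lemma \ref{ctrchecklemma} with $\cP = \bar{\Omega}(\C)$ to rewrite $\Hom(\bar{\Omega}(\C), \cQ^{\ctr}(R))$ as the fibre product of $\Hom(\widehat{\bar{\Omega}(\C)}, \cQ \hten R)$ and $\Hom(\bar{\Omega}(\C), \cQ)$ over $\Hom(\widehat{\bar{\Omega}(\C)}, \cQ)$. Lemma \ref{hatMClemma}, applied once with $\cQ$ replaced by $\cQ \hten R$ and once as stated, identifies the first two factors with $\prod_{x} \mc(\HHom^{\bS}(\C, \cQ|_x) \hten R)$ and $\prod_{x} \mc(\HHom^{\bS}(\C, \cQ|_x))$ respectively (noting $\Ob(\cQ\hten R) = \Ob \cQ$). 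The third factor is handled by the hat-free analogue: the same argument as in the proof of \cite[Proposition 17]{MerkulovValletteDefThRepsProps} without the formal co-identity gives $\Hom(\bar{\Omega}(\C), \cQ) \cong \prod_{x} \mc(\HHom^{\bS}(\bar{\C}, \cQ|_x))$. Combining these produces the first displayed isomorphism.

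For the second isomorphism, I would fix a colour $x \in \Ob \cQ$ and split $\HHom^{\bS}(\C, \cQ|_x) = \HHom^{\bS}(\bar{\C}, \cQ|_x) \oplus \HHom^{\bS}(I, \cQ|_x)$. A point of the fibre at $x$ is a pair $(\omega, \gamma)$ with $\omega \in \mc(\HHom^{\bS}(\C, \cQ|_x) \hten R)$ and $\gamma \in \mc(\HHom^{\bS}(\bar{\C}, \cQ|_x))$, compatible in the sense that $\omega \equiv \gamma \pmod{\m(R)}$, where $\gamma$ is implicitly extended by zero on $I$. Using a $k$-linear splitting $R = k \oplus \m(R)$, I then write $\omega = \gamma + \tilde{\omega}$ for a unique $\tilde{\omega} \in \HHom^{\bS}(\C, \cQ|_x) \hten \m(R) = \DDer_{\bar{\Omega}\C}(\gamma) \hten \m(R)$.

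The key technical step, and the main obstacle, is verifying that the MC equation $d\omega + \half[\omega,\omega] = 0$ for $\omega$ decomposes as the MC equation for $\gamma$ together with the twisted MC equation $(d+[\gamma,-])\tilde{\omega} + \half[\tilde{\omega},\tilde{\omega}] = 0$ defining $\mc(\DDer_{\bar{\Omega}\C}(\gamma) \hten \m(R))$. Expanding gives $d\gamma + \half[\gamma,\gamma] + d\tilde{\omega} + [\gamma, \tilde{\omega}] + \half[\tilde{\omega}, \tilde{\omega}] = 0$, and the claim reduces to checking that the extension of $\gamma$ by zero on $I$ is itself Maurer--Cartan in $\HHom^{\bS}(\C, \cQ|_x)$; this follows because $\gamma(1)=0$ together with $\Delta(1) = 1 \boxtimes 1$ forces $[\gamma,\gamma]$ to vanish on $I$, while the decomposition $\Delta(c) = c\boxtimes 1 + 1\boxtimes c + \bar{\Delta}(c)$ for $c \in \bar{\C}$ shows that the restriction of the convolution bracket on $\HHom^{\bS}(\C, \cQ|_x)$ to functions supported on $\bar{\C}$ agrees with the reduced convolution bracket on $\HHom^{\bS}(\bar{\C}, \cQ|_x)$. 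Once this bookkeeping comparing full and reduced coproducts is done, indexing the fibre product by $\gamma$ ranging over $\prod_x \mc(\HHom^{\bS}(\bar{\C}, \cQ|_x)) = \{\bar{\Omega}(\C) \to \cQ\}$ yields the second isomorphism.
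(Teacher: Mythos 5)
Your proposal is correct and follows the same route as the paper, whose entire proof is the single line ``Combining Lemmas \ref{ctrchecklemma} and \ref{hatMClemma}, we have:''. The details you supply --- notably that the Maurer--Cartan equation for $\omega=\gamma+\tilde{\omega}$ splits into the equation for $\gamma$ extended by zero on $I$ (which is exactly how $\Hom(\bar{\Omega}\C,\cQ)$ sits inside $\Hom(\widehat{\bar{\Omega}\C},\cQ)$ under Lemma \ref{hatMClemma}) plus the twisted equation defining $\mc(\DDer_{\bar{\Omega}\C}(\gamma)\hten\m(R))$ --- are precisely the bookkeeping the paper leaves implicit.
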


\subsubsection{The contraderived construction as a Quillen functor}

Following standard sign conventions for shifts, given a dg category $\cQ$,  we have a  dg associative algebra $(\cQ(y;y) \oplus \cQ(x;y)[1] \oplus  \cQ(x;x))$ with differential $d(\alpha,\theta, \beta)= ([d, \alpha], -[d, \theta], [d, \beta])$ and multiplication
 \[
  (\alpha,\theta,\beta) \cdot (\alpha',\theta',\beta'):= (\alpha \circ \alpha', (-1)^{\deg \alpha} \alpha \circ \theta' + \theta \circ \beta',\beta \circ \beta').
 \]
 
\begin{lemma}\label{morphismMClemma} Given $R \in dg\Art_k$, a  $k$-linear dg category $\cQ$ and a morphism $f_0 \co x \to y$ in the underlying category $\z^0\cQ$, the set of morphisms in $\z^0\cQ^{\ctr}(R)$ lying over $f_0$ is isomorphic to the Maurer--Cartan set $\mc(  (\cQ(y;y) \oplus \cQ(x;y)[1] \oplus  \cQ(x;x))^{f_0} ,R)$ of the twisting by $f_0$. 
 \end{lemma}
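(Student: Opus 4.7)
The plan is to unwind both sides of the claimed bijection and verify it by direct calculation. On the target side, chasing the definition $\cQ^{\ctr}(R) = \MC(\cQ\hten R)\by_{\MC\cQ}\cQ$, a colour of $\z^0\cQ^{\ctr}(R)$ amounts to a pair $(x,\omega)$ with $\omega \in \mc(\cQ(x;x)\hten\m(R))$, and a morphism from $(x,\omega)$ to $(y,\nu)$ in $\z^0\cQ^{\ctr}(R)$ lying over a prescribed $f_0 \co x \to y$ in $\z^0\cQ$ is precisely an element $\Phi \in (\cQ(x;y)\hten R)^0$ with $\Phi \equiv f_0 \pmod{\m(R)}$ satisfying the twisted-closure condition
\[
 d\Phi + \nu\circ\Phi - \Phi\circ\omega = 0.
\]
So such a morphism is the same datum as a triple $(\omega,\Phi,\nu)$ of this form.

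The strategy is then to organise these three pieces of data as a single Maurer--Cartan element in the twisted algebra $A^{f_0}$, where $A := \cQ(y;y) \oplus \cQ(x;y)[1] \oplus \cQ(x;x)$. The shift $[1]$ is set up so that $F := (0,f_0,0)$ sits in degree one of $A$, and one readily checks that $F$ is Maurer--Cartan: $dF = 0$ because $f_0 \in \z^0\cQ$, and $F \cdot F = 0$ follows from the multiplication formula since the outer components of $F$ vanish. Hence the twist $A^{f_0}$ is well-defined, and the standard twisting identity gives that $\xi \in (A^{f_0}\hten\m(R))^1$ is Maurer--Cartan in the twist if and only if $F + \xi$, viewed in $A \hten R$ with $F$ placed along the augmentation $k \to R$, satisfies $d(F+\xi) + (F+\xi)\cdot(F+\xi) = 0$.

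Decomposing $\xi = (\nu,\phi,\omega)$ and setting $\Phi := f_0 + \phi$, so $F + \xi = (\nu,\Phi,\omega)$, the three components of this equation unpack via the prescribed multiplication and differential on $A$ into
\begin{align*}
 d\nu + \nu\circ\nu &= 0,\\
 d\Phi + \nu\circ\Phi - \Phi\circ\omega &= 0,\\
 d\omega + \omega\circ\omega &= 0,
\end{align*}
which are exactly the three conditions characterising a morphism of $\z^0\cQ^{\ctr}(R)$ lying over $f_0$. The inverse assignment $(\omega,\Phi,\nu) \mapsto (\nu,\Phi - f_0,\omega)$ is manifest. The only delicate point is keeping track of the Koszul sign $(-1)^{\deg\alpha}$ in the product rule (with $\deg\alpha = 1$) together with the minus sign in $d(\alpha,\theta,\beta) = ([d,\alpha],-[d,\theta],[d,\beta])$ on the shifted middle summand; these combine so that $d\Phi$ appears with the correct sign in the middle equation, and after that the correspondence is tautological.
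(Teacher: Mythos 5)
Your proof is correct and follows essentially the same route as the paper's: both unwind a Maurer--Cartan element of the twist $(\cQ(y;y)\oplus\cQ(x;y)[1]\oplus\cQ(x;x))^{f_0}$ with coefficients in $\m(R)$ into its three components and identify the two outer equations with the Maurer--Cartan conditions on the source and target colours and the middle equation with the condition that $f_0+\theta$ be a closed morphism in $\cQ^{\ctr}(R)$. Your added remarks that $F=(0,f_0,0)$ is itself Maurer--Cartan and that the twisted equation for $\xi$ is the untwisted equation for $F+\xi$ are just a slightly more explicit packaging of the same computation.
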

\begin{proof}
An element of $\mc( (\cQ(y;y) \oplus \cQ(x;y)[1] \oplus  \cQ(x;x))^{f_0},R)$ is given by $\alpha \in \mc(\cQ(y;y),R)$, $\beta \in \mc(\cQ(x;x),R)$ and $\theta \in (\cQ(x;y)\ten\m(R))^0$ satisfying 
\begin{align*}
&-[d, f_0 + \theta] - \alpha \circ (f_0 + \theta) + (f_0 + \theta) \circ \beta = 0, \quad\text{ i.e.}\\
&(f_0+\theta) \circ (d + \beta) = (d + \alpha) \circ (f_0 + \theta).
\end{align*}
 The latter is precisely the condition for $f_0 + \theta$ to be a morphism from $(x,\beta)$ to $(y,\alpha)$ in $\z^0\cQ^{\ctr}(R)$ lying over $f_0$.
\end{proof}

By \cite[Theorems 3.1 and 3.2]{hinstack} and \cite[Proposition \ref{ddt1-dgspmodel}]{ddt1},
there is a fibrantly cogenerated closed model structure on $\pro(dg\Art_k)$ with cogenerating fibrations given by small extensions in $dg\Art_k$ and cogenerating trivial fibrations given by small extensions with acyclic kernel. The relevant notion of weak equivalence is stronger than quasi-isomorphism (making this a model structure of the second kind in the sense of \cite{positselskiDerivedCategories}), and corresponds to quasi-isomorphism of the Koszul dual DGLAs, or to a homotopy lifting property with respect to quasi-free algebras as in  \cite[Definition \ref{ddt1-dgzweakdef}]{ddt1}.

\begin{proposition}\label{ctrrightQprop}
For any small dg (pr)operad $\cQ$, the functor $\cQ^{\ctr}$ from $\pro(dg\Art_k)$  to the category of small coloured $k$-linear  dg (pr)operads over $\cQ$ is right Quillen.
\end{proposition}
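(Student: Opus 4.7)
The plan is to verify directly that $\cQ^{\ctr}$ has a left adjoint and sends (acyclic) small extensions to (trivial) fibrations in the model structure of Lemma \ref{properadmodellemma}. Since the model structure on $\pro(dg\Art_k)$ is fibrantly cogenerated by (acyclic) small extensions and $\cQ^{\ctr}$ will preserve pullbacks and cofiltered limits, this suffices for the Quillen property.

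For the left adjoint I would first observe that $\cQ^{\ctr}(R) = \MC(\cQ \hten R)\by_{\MC\cQ} \cQ$ preserves all small limits in $R$, since $\MC$ is right adjoint to $\widehat{(-)}$, fibre products are limits, and $\cQ \hten R$ is a cofiltered limit. Combined with Proposition \ref{checkMCprop} --- which identifies $\Hom_{/\cQ}(\bar{\Omega}\C,\cQ^{\ctr}(R))$ with Maurer--Cartan sets of convolution DGLAs, representable in $\pro(dg\Art_k)$ by the standard Koszul-dual Chevalley--Eilenberg construction --- this gives the left adjoint explicitly on sources of the form $\bar{\Omega}\C$ (which is enough, since cofibrant replacements can be chosen of this form) and by routine adjoint-functor arguments for general $\cP$.

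For preserving (trivial) fibrations I would work directly from the cogenerators. Fix a small extension $\pi \co R \to R/I$ with $I \cdot \m(R) = 0$. By Proposition \ref{Dctrcheckprop}, for any choice of colours $(V_\bt,\omega_\bt)$ and $(W_\bt,\nu_\bt)$ the induced map on morphism complexes is the surjection $\cQ(V_\bt;W_\bt)\hten R \twoheadrightarrow \cQ(V_\bt;W_\bt)\hten R/I$ carrying its twisted differential. Because $I \cdot \m(R) = 0$ annihilates the MC-correction terms $\omega \circ (-)$ and $(-) \circ \omega$, the kernel $\cQ(V_\bt;W_\bt)\hten I$ inherits the \emph{untwisted} differential, giving componentwise surjectivity in general and, when $I$ is acyclic, a quasi-isomorphism on morphism complexes. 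Essential surjectivity of $\H^0\cQ^{\ctr}(\pi)$ in the acyclic case will follow from Lemma \ref{defOblemma}, as the obstruction to lifting objects lies in $\H^2$ of a complex tensored with the acyclic $I$.

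The main obstacle will be the $\H^0$-isofibration condition for a general small extension $\pi$. Given $(V_0,\omega_0) \in \cQ^{\ctr}(R)$ and an iso $\bar f \in \H^0\cQ^{\ctr}(R/I)$ from $(V_0,\bar\omega_0)$ to some $(V_1,\bar\omega_1)$, my strategy is to simultaneously lift $\bar\omega_1$ to an MC element $\omega_1 \in \mc(\EEnd_k(V_1)\hten \m(R))$ and correct a degree-$0$ lift of $\bar f$ so that it is closed in the twisted differential $d + \omega_1 - \omega_0$, producing an iso $(V_0,\omega_0) \to (V_1,\omega_1)$ in $\H^0\cQ^{\ctr}(R)$. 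The joint obstruction is controlled by the cone of $\bar f$ via complexes analogous to those of Lemmas \ref{defHomlemma} and \ref{defOblemma}, and it vanishes because $\bar f$ being a $\H^0$-iso forces its cone to be contra-acyclic; the template for this vanishing argument is Lemma \ref{2fibrnlemma} (itself after \cite[Lemma 4.2.1]{hinichDefsHtpyAlg}). The coupling between the lift of $f$ and the simultaneous choice of $\omega_1$ is what distinguishes this step from the other checks and constitutes the main technical point.
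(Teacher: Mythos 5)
Your proposal follows essentially the same route as the paper: limit-preservation plus pro-representability for the left adjoint, reduction to the cogenerating (acyclic) small extensions, componentwise surjectivity with acyclic kernel for trivial fibrations, an $\H^2$ obstruction for essential surjectivity, and an obstruction argument controlled by the (co)cone of the underlying homotopy equivalence for the isofibration condition (the paper packages this last step as Maurer--Cartan elements of the twisted DGLA $(\cQ(y;y)\oplus\cQ(x;y)[1]\oplus\cQ(x;x))^{f_0}$, whose kernel over the small extension is acyclic precisely because $f_0^*$ and $f_{0*}$ are quasi-isomorphisms; this is the same mechanism as your ``cone of $\bar f$'' argument).

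One step is missing from your sketch of the isofibration check: having produced a closed degree-$0$ lift $\tilde f$ of $\bar f$, you assert it is an isomorphism in $\H^0\cQ^{\ctr}(R)$, but this does not follow from the lifting alone --- for a general $\cQ$ there is no analogue of the characterisation of homotopy equivalences as mod-quasi-isomorphisms, so invertibility of $[\tilde f]$ is not detected after reduction. The paper closes this by also lifting the homotopy inverse and the two homotopies, and observing that lifts of the identity have the form $\id+u$ with $u\in\z^0(\cQ(y;y)\ten I)$, which are genuine isomorphisms since $I$ is square-zero; hence $\tilde f\circ\tilde g$ and $\tilde g\circ\tilde f$ are homotopic to isomorphisms and $[\tilde f]$ is invertible. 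You would need to add this (short) argument; with it, your proof goes through.
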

\begin{proof}
The characterisation of  $\cQ^{\ctr}$ in Lemma \ref{ctrchecklemma} ensures that it preserves all limits when the codomain of the functor is taken to be the slice category over $\cQ$. Since $dg\Art_k$ is an Artinian category, the pro-representability theorem of \cite[Corollary to Proposition 3.1]{descent} thus implies that $\cQ^{\ctr}$ has a left adjoint.

It remains to show that $\cQ^{\ctr}$ preserves fibrations and trivial fibrations, and it suffices to check these conditions on the cogenerating morphisms, since limits are preserved. Given a small extension $R \to R/I$ in $dg\Art_k$, we have short exact sequences
\[
0 \to \cQ(\uline{x}; \uline{y})\ten I \to  \cQ^{\ctr}(R)((\uline{x}, \uline{\omega}); (\uline{y}, \uline{\nu})) \to \cQ^{\ctr}(R/I)((\uline{x}, \uline{\bar{\omega}}); (\uline{y}, \uline{\bar{\nu}})) \to 0
\]
for all colours $(x_i,\omega_i)$ and $(y_j, \nu_j)$ of $\cQ^{\ctr}(R)$, with respective images $(x_i,\bar{\omega}_i)$ and $(y_j, \bar{\nu}_j)$ in $\cQ^{\ctr}(R/I)$. We thus have surjectivity on complexes of multimorphisms, with the maps moreover being quasi-isomorphisms when $I$ is acyclic.

The standard obstruction argument  of e.g. \cite[Lemma \ref{utrecht-obsdglalemma}]{utrecht} shows that the obstruction to lifting an element from 
$\mc(\cQ(x;x)\ten \m(R)/I)$ 
to $\mc(\cQ(x;x)\ten \m(R)) $
lies in  $\H^{2}(\cQ(x;x)\ten I)$, 
which  gives surjectivity of $\cQ^{\ctr}(R) \to \cQ^{\ctr}(R/I)$ on objects when $I$ is acyclic.

Now consider a homotopy equivalence $f_0 \in \z^0\cQ(x;y)$.
The complex underlying the  DGLA $L:= (\cQ(y;y) \oplus \cQ(x;y)[1] \oplus  \cQ(x;x))^{f_0}$ from Lemma \ref{morphismMClemma} is the cocone of $(-f_0^*, f_{0*}) \co  \cQ(y;y) \oplus \cQ(x;x) \to \cQ(x;y)$. Since $f_0$ is a homotopy equivalence, the maps $f_0^*$ and $f_{0*}$ are both quasi-isomorphisms,  so the projection maps $L \to \cQ(x;x)$ and $L \to \cQ(y;y)$ are both  quasi-isomorphisms of DGLAs. For any small extension $R \to R/I$, the central extension 
\[
L\ten \m(R) \to (\cQ(x;x)\ten \m(R))\by_{(\cQ(x;x)\ten \m(R/I))}(L\ten \m(R/I))
\]
of DGLAs thus has acyclic kernel (and similarly for $y$ replacing $x$), so is surjective on Maurer--Cartan elements by \cite[Lemma \ref{utrecht-obsdglalemma}]{utrecht}. 

Lemma \ref{morphismMClemma} allows us to interpret Maurer--Cartan elements on the right as the data of an element $(x,\omega) \in \Ob \cQ^{\ctr}(R)$ together with a morphism  $f \in \z^0\cQ^{\ctr}(R/I) ((x, \bar{\omega});(y,\nu))$ lifting $f_0$. Surjectivity then says that this lifts to a morphism $\tilde{f}$ in $\z^0\cQ^{\ctr}(R)$ from $(x, \omega)$ to some lift $(y, \tilde{\nu})$ of $(y,\nu)$. 

To complete the  argument that $\H^0\cQ^{\ctr}(R) \to \H^0\cQ^{\ctr}(R/I)$ induces an isofibration of the underlying categories, 
it remains to show that the morphism $\tilde{f}$ is a homotopy equivalence whenever $f$ is so (which will thus be whenever $f_0$ is so, by induction). The same argument in reverse allows us to lift the homotopy inverse $g$ of $f$ to a morphism $\tilde{g}$, and we can also choose arbitrary lifts of the homotopies $f\circ g \sim \id$ and $g\circ f \sim \id$. Lifts of the identity morphism $\id_{(y,\nu)}$ are of the form $\id + \z^0(\cQ(y;y)\ten I)$, and since $I$ is square-zero, these are all isomorphisms (with $(\id+u)^{-1}= \id -u$). Thus $\tilde{f}\circ \tilde{g}$ and $\tilde{g}\circ \tilde{f}$ are both homotopic to isomorphisms, so  their homotopy classes are both isomorphisms in $\H^0\cQ^{\ctr}(R)$, meaning that   $[\tilde{f}]$ must also be so.
\end{proof}
 
 \subsubsection{Derived derivations govern the mapping space}

 The following theorem shows that the DGLA of $\bar{\Omega}\C$-algebra derivations from Definition \ref{DDergammadef} governs deformations of an algebraic structure, regarded as a morphism of coloured dg (pr)operads. 
 
 \begin{theorem}\label{mainthm}
  Given  a coloured $k$-linear dg (pr)operad $\cQ$, an augmented dg co(pr)operad $\C$   and a morphism $\gamma \co \bar{\Omega}\C \to \cQ$, 
  there is a natural zigzag of weak equivalences
  \[
\oR\map(\bar{\Omega}\C, \cQ^{\ctr}(R))\by^h_{\oR\map(\bar{\Omega}\C, \cQ) } \{\gamma\} \simeq   \mmc(   \DDer_{\bar{\Omega}\C}(\gamma),R)
  \]
of simplicial sets, natural in $R \in \pro(dg\Art_k)$, where the mapping spaces on the left are taken in the $\infty$-category of coloured dg (pr)operads localised at quasi-equivalences.
 \end{theorem}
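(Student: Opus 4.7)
The plan is to compute the derived mapping space directly via the Sullivan-type simplicial framing $Y \otimes \Omega^{\bt}(\Delta^\bullet)$ on coloured dg (pr)operads (the one tautologically adapted to Hinich's $\mmc$) and then reduce to a simplicially enriched form of Proposition \ref{checkMCprop}. First I would verify that $\bar{\Omega}\C = \Omega(\bar{\C})$ is cofibrant in the model structure of Lemma \ref{properadmodellemma}: as a graded (pr)operad it is quasi-free on the desuspension of $\bar{\C}$, hence a transfinite composition of pushouts of generating cofibrations from the class $I2$. Next I would check that $Y \otimes \Omega^{\bt}(\Delta^\bullet)$ is a Reedy cosimplicial framing, using the dg-flatness of $\Omega^{\bt}(\Delta^n)$ and the fact that $\Omega^{\bt}(\Delta^n) \to \Omega^{\bt}(\partial \Delta^n)$ is a surjective quasi-isomorphism onto the appropriate matching object.

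By Proposition \ref{ctrrightQprop} applied to the fibration $R \to k$ in $\pro(dg\Art_k)$, the induced map $\cQ^{\ctr}(R) \to \cQ$ is a fibration of coloured dg (pr)operads. Combined with the cofibrancy of $\bar{\Omega}\C$ and the framing, the natural map $\oR\map(\bar{\Omega}\C, \cQ^{\ctr}(R)) \to \oR\map(\bar{\Omega}\C, \cQ)$ is then a Kan fibration of simplicial sets, so its homotopy fibre over $\gamma$ coincides with the strict fibre. At simplicial level $n$, this strict fibre consists of morphisms $\phi \co \bar{\Omega}\C \to \cQ^{\ctr}(R) \otimes \Omega^{\bt}(\Delta^n)$ whose reduction mod $\m(R)$ equals the constant $n$-simplex $\gamma \otimes 1$. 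Such a $\phi$ selects a target colour $(x_\gamma, \omega)$ with $\omega \in \mc(\cQ(x_\gamma; x_\gamma) \hten \m(R))$ and factors through the single-colour full sub-(pr)operad on $(x_\gamma, \omega)$, which may be rewritten as the twist $(\cQ|_{x_\gamma} \otimes \Omega^{\bt}(\Delta^n) \hten R)^{\omega \otimes 1}$.

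The same combination of the adjunction $\widehat{(-)} \dashv \MC(-)$ and the convolution description used to prove Proposition \ref{checkMCprop} — namely Lemmas \ref{ctrchecklemma} and \ref{hatMClemma} — but now applied with the enriched base $\Omega^{\bt}(\Delta^n) \hten R$ in place of $R$, and using that $\HHom^{\bS}(\C, -)$ commutes up to completion with $-\otimes \Omega^{\bt}(\Delta^n)$, identifies the set of such $\phi$ with the Maurer--Cartan set of the twisted DGLA $\DDer_{\bar{\Omega}\C}(\gamma) \otimes \Omega^{\bt}(\Delta^n) \hten \m(R)$. By Definition \ref{mmcdef}, this is precisely $\mmc(\DDer_{\bar{\Omega}\C}(\gamma), R)_n$, and the bijections are natural both in $R$ and in the simplex, giving the sought-after zigzag (the weak equivalence between homotopy and strict fibres, followed by an isomorphism onto $\mmc$).

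The main obstacle I anticipate is the technical bookkeeping, on two fronts. First, verifying that $Y \otimes \Omega^{\bt}(\Delta^\bullet)$ is a Reedy cosimplicial framing in the somewhat non-standard model structure of Lemma \ref{properadmodellemma} requires identifying the matching object as $Y \otimes \Omega^{\bt}(\partial \Delta^n)$ and a slightly delicate check of the isofibration condition on $\H^0$ (rather than just componentwise surjectivity). Second, the enriched form of Proposition \ref{checkMCprop} over $\Omega^{\bt}(\Delta^n)$-coefficients needs care when the components of $\C$ are infinite-dimensional in fixed arity — the commutation of $\HHom^{\bS}(\C, -)$ with $-\otimes \Omega^{\bt}(\Delta^n)$ should be handled consistently using the $\hten$-formalism already in use throughout the paper.
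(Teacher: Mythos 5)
Your overall architecture (cofibrancy of $\bar{\Omega}\C$, fibrancy of $\cQ^{\ctr}(R)\to\cQ$ via Proposition \ref{ctrrightQprop}, reduction to a simplicially enriched Proposition \ref{checkMCprop}) is close in spirit to the paper's, but there is a genuine gap in where you place the simplicial framing, and it is fatal as stated. The simplicial object $Y\otimes\Omega^{\bt}(\Delta^{\ast})$ is \emph{not} Reedy fibrant in the model structure of Lemma \ref{properadmodellemma}: already the matching map $Y\otimes\Omega^{\bt}(\Delta^1)\to Y\by Y$ is the identity on colours composed with the diagonal, so $\H^0$ of it cannot be an isofibration as soon as $\H^0 Y$ contains an isomorphism between two distinct objects --- and $\H^0\cQ^{\ctr}(R)$ always does (gauge-equivalent twists $(x,\omega_0)\cong(x,\omega_1)$ of the same underlying colour). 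Hence $n\mapsto\Hom(\bar{\Omega}\C,\cQ^{\ctr}(R)\otimes\Omega^{\bt}(\Delta^n))$ does not compute $\oR\map$ in the localisation at quasi-equivalences. The same defect surfaces concretely in your level-$n$ identification: the colours of $\cQ^{\ctr}(R)\otimes\Omega^{\bt}(\Delta^n)$ are pairs $(x,\omega)$ with $\omega\in\mc(\cQ(x;x)\hten\m(R))$, constant in the simplex coordinates, whereas $\mmc(\DDer_{\bar{\Omega}\C}(\gamma),R)_n$ contains Maurer--Cartan elements whose component along the co-unit summand $I\subset\C\oplus I$ varies over $\Delta^n$ (by Lemma \ref{hatMClemma}, these are exactly the simplices realising gauge equivalences between deformed colours). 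Your strict fibre is therefore a proper sub-simplicial set of $\mmc(\DDer_{\bar{\Omega}\C}(\gamma),R)$, and on $\pi_0$ it would fail to identify gauge-equivalent deformations.

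The repair is to move $\Omega^{\bt}(\Delta^n)$ inside the contraderived construction rather than applying it to the (pr)operad: replace $\cQ^{\ctr}(R)\otimes\Omega^{\bt}(\Delta^n)$ by $\cQ^{\ctr}(\widehat{R}(n))$ for a Reedy framing $\widehat{R}(\ast)$ of $R$ in $\pro(dg\Art_k)$ (morally $R\hten\Omega^{\bt}(\Delta^{\ast})$). Then the colours genuinely deform over the simplex, Proposition \ref{checkMCprop} applies levelwise verbatim, and Reedy fibrancy is automatic because $\cQ^{\ctr}$ is right Quillen (Proposition \ref{ctrrightQprop}) --- no separate verification of a framing on coloured dg (pr)operads is needed. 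This is exactly the paper's proof, which then converts $\Hom(\b^*\DDer_{\bar{\Omega}\C}(\gamma),\widehat{R}(\ast))$ into $\mmc(\DDer_{\bar{\Omega}\C}(\gamma),R)$ using the Quillen equivalence $\b^*$ between DGLAs and $\pro(dg\Art_k)$ together with the framing $L\otimes\Omega^{\bt}(\Delta^{\ast})$ on the DGLA side, where the analogue of your framing concern is unproblematic.
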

\begin{proof}
As in  \cite[Theorem \ref{ddt1-mcequiv}]{ddt1} or \cite[Theorem 3.2]{hinstack}, there is a contravariant Quillen equivalence $\b^*$ from the category of $k$-linear DGLAs to $\pro(dg\Art_k)$. By Proposition \ref{checkMCprop}, we then have 
\begin{align*}
 \Hom(\bar{\Omega}\C\xra{\gamma} \cQ, \cQ^{\ctr}(R) \to \cQ)_{\cQ} &\cong \mc(\DDer_{\bar{\Omega}\C}(\gamma)\hten \m(R))\\
 &\cong \Hom
 ( \b^*\DDer_{\bar{\Omega}\C}(\gamma),R).
\end{align*}

Since right Quillen functors preserve simplicial Reedy framings \cite[\S 5.4]{hovey}, Proposition \ref{ctrrightQprop} implies that for any such framing $\widehat{R}(\ast)$ of $R \in \pro(dg\Art_k)$, we have
\[
\oR\map(\bar{\Omega}\C\xra{\gamma} \cQ, \cQ^{\ctr}(R) \to \cQ)_{\cQ} \simeq   \Hom(\bar{\Omega}\C\xra{\gamma} \cQ, \cQ^{\ctr}(\widehat{R}(\ast)) \to \cQ)_{\cQ},
\]
which in turn is equivalent to
\[
 \Hom( \b^*\DDer_{\bar{\Omega}\C}(\gamma),\widehat{R}(\ast)) \simeq \oR\map_{\pro(dg\Art_k)}( \b^*\DDer_{\bar{\Omega}\C}(\gamma),R)
\]
under the equivalences above. Since $\b^*$ is  Quillen and  $L\ten \Omega^{\bt}(\Delta^{\ast})$ gives a simplicial Reedy framing for any DGLA $L$, this is equivalent to
\[
 \Hom( \b^*(\DDer_{\bar{\Omega}\C}(\gamma)\ten \Omega^{\bt}(\Delta^{\ast})) ,R)= \mmc(\DDer_{\bar{\Omega}\C}(\gamma),R). \qedhere
\]
\end{proof}

In order to avoid category theory with universes, for any cardinal $\kappa$ consider $\cD_{dg,\kappa}^{\ctr,\ten,(\ten)}(R) \subset \cD_{dg}^{\ctr,\ten,(\ten)}(R)$ consisting of complexes $M$ with $\sum_i \rk_R M_i < \kappa$, and similarly for $\cD_{dg,\kappa}^{\ten,(\ten)}(R)$. Corollary \ref{maincor} will imply that increasing $\kappa$ does not affect the following space.
\begin{definition}
 Given a $k$-linear dg (pr)operad $\cP$, a $\cP$-algebra  $\gamma \co \cP \to \cD_{dg,\kappa}^{\ten,(\ten)}(k)$ and $R \in dg\Art_k$, define the space $\Def_{\cP_{\infty}}(\gamma,R)$ of derived deformations of $\gamma$ to be the homotopy fibre product
 \[
  \oR\map(\cP, \cD^{\ten,(\ten)}_{dg,\kappa}(R))\by^h_{\oR\map(\cP, \cD^{\ten,(\ten)}_{dg,\kappa}(k)  ) } \{\gamma\},
 \]
where mapping spaces are taken in the $\infty$-category of coloured dg (pr)operads localised at quasi-equivalences.
  
 Similarly, for  $R \in \pro(dg\Art_k)$, define the space $\Def_{\cP_{\infty}}^{\ctr}(\gamma,R)$ of contraderived deformations of $\gamma$ to be the homotopy fibre product
 \[
  \oR\map(\cP, \cD^{\ctr,\ten,(\ten)}_{dg,\kappa}(R))\by^h_{\oR\map(\cP, \cD^{\ten,(\ten)}_{dg,\kappa}(k)  ) } \{\gamma\}.
 \]
  \end{definition}

 \begin{corollary}\label{maincor}
  Given a dg (pr)operad $\cP$, a $\cP$-algebra  $\gamma \co \cP \to \cD_{dg}(k)$ and a quasi-isomorphism $\bar{\Omega}\C \to \cP$ for some co-augmented dg co-operad $\C$, there is a canonical equivalence
  \[
  \Def_{\cP_{\infty}}^{\ctr}(\gamma,R) \simeq \mmc(   \DDer_{\bar{\Omega}\C}(\gamma),R),
  \]
  of simplicial sets, 
functorial in $R \in \pro(dg\Art)$.

If $R \in dg_+\Art$ and the complex $\gamma(\ast)$ underlying $\gamma$ is homologically bounded below, then the canonical map
\[
 \Def_{\cP_{\infty}}(\gamma,R) \to \Def_{\cP_{\infty}}^{\ctr}(\gamma,R) 
\]
is also an equivalence.  
 \end{corollary}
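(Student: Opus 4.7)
The first equivalence is a repackaging of Theorem \ref{mainthm}, needing two preliminary identifications.  Firstly, Proposition \ref{Dctrcheckprop} supplies a canonical equivalence
\[
 \cD_{dg,\kappa}^{\ctr,\ten,(\ten)}(R) \simeq \cD_{dg,\kappa}^{\ten,(\ten)}(k)^{\ctr}(R)
\]
of coloured dg (pr)operads, natural in $R \in \pro(dg\Art_k)$.  Secondly, the hypothesised quasi-isomorphism $\bar{\Omega}\C \to \cP$ is automatically a quasi-equivalence in the model structure of Lemma \ref{properadmodellemma}, since both (pr)operads are monochrome and so the underlying-category equivalence condition is trivial; it therefore induces a weak equivalence on derived mapping spaces out of it.  Applying these two identifications, the homotopy fibre defining $\Def_{\cP_{\infty}}^{\ctr}(\gamma,R)$ becomes the homotopy fibre, over the composite $\gamma'\co \bar{\Omega}\C \to \cP \xra{\gamma} \cD_{dg,\kappa}^{\ten,(\ten)}(k)$, of
\[
 \oR\map(\bar{\Omega}\C, \cD_{dg,\kappa}^{\ten,(\ten)}(k)^{\ctr}(R)) \to \oR\map(\bar{\Omega}\C, \cD_{dg,\kappa}^{\ten,(\ten)}(k)).
\]
Theorem \ref{mainthm}, applied with $\cQ := \cD_{dg,\kappa}^{\ten,(\ten)}(k)$, identifies this with $\mmc(\DDer_{\bar{\Omega}\C}(\gamma'),R)$.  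Independence of $\kappa$ is manifest since the right-hand side only sees the single colour $\gamma(\ast)$.

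For the comparison between derived and contraderived deformations, observe that the inclusion $\cD_{dg,\kappa}^{\ten,(\ten)}(R) \into \cD_{dg,\kappa}^{\ctr,\ten,(\ten)}(R)$ is a fully faithful map of coloured dg (pr)operads (a sub-(pr)operad inclusion), since on shared colours the multimorphism complexes in both are given by the same formula $\HHom_R$.  Now restrict to the full sub-(pr)operads on objects $M$ with $\H_i(M\ten_R k)=0$ for $i \ll 0$, say $\cD_{dg,\kappa}^{\ten,(\ten),\geq}(R)$ and $\cD_{dg,\kappa}^{\ctr,\ten,(\ten),\geq}(R)$ respectively.  When $R \in dg_+\Art$, Lemma \ref{connectivitylemma} forces every object of the second to already lie in $\cD_{dg}(R)$, so the two sub-(pr)operads \emph{coincide} as coloured dg (pr)operads.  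Because $\gamma(\ast)$ is bounded below by hypothesis, every point of the homotopy fibre over $\gamma$ (on either side) assigns the colour of $\cP$ to an object whose base change to $k$ is quasi-isomorphic to $\gamma(\ast)$, and hence is supported on this common sub-(pr)operad.  Consequently $\Def_{\cP_{\infty}}(\gamma,R)$ and $\Def_{\cP_{\infty}}^{\ctr}(\gamma,R)$ are both naturally identified with the homotopy fibre over $\gamma$ of a mapping space into the \emph{same} target, and the comparison map is an equivalence.

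The main technical point is ensuring that ``the fibre is supported on the bounded-below sub-(pr)operad'' holds at every simplicial level and not merely at $\pi_0$.  This reduces to a general property of sub-(pr)operad inclusions: they induce an inclusion of path components on derived mapping spaces, since the cosimplicial Reedy frames used to compute $\oR\map$ (in the proof of Theorem \ref{mainthm}, via $\Omega^{\bt}(\Delta^n)$) change only the multimorphism complexes and not the colour set of the target.  Thus the colour-assignment associated to each vertex of a higher simplex of the fibre necessarily lies in the bounded-below sub-(pr)operad, so the support argument of the previous paragraph applies throughout and yields the full equivalence.
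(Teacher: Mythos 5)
Your proposal is correct and follows essentially the same route as the paper: the first equivalence is Theorem \ref{mainthm} applied to $\cQ=\cD^{\ten,(\ten)}_{dg,\kappa}(k)$ via the identification of Proposition \ref{Dctrcheckprop} (after transporting along the quasi-equivalence $\bar{\Omega}\C\to\cP$), and the second follows from Lemma \ref{connectivitylemma} together with the fullness of $\cD_{dg}(R)\into\cD^{\ctr}_{dg}(R)$. Your write-up merely makes explicit the colour-support argument that the paper compresses into the single assertion $\cD_{dg}(R)\by^h_{\cD_{dg}(k)}\{\gamma(\ast)\}\simeq\cD^{\ctr}_{dg}(R)\by^h_{\cD_{dg}(k)}\{\gamma(\ast)\}$.
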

\begin{proof}
 The first statement follows from Theorem \ref{mainthm} applied to the small dg (pr)operad $\cQ= \cD_{dg,\kappa}^{\ten,(\ten)}(k)$ via the equivalence of Proposition \ref{Dctrcheckprop}. The second statement  follows because Lemma \ref{connectivitylemma}  implies that $\cD_{dg}(R)\by^h_{\cD_{dg}(k)}\{\gamma(\ast)\} \simeq   \cD^{\ctr}_{dg}(R)\by^h_{\cD_{dg}(k)}\{\gamma(\ast)\}$ under those additional hypotheses. 
\end{proof}

 \begin{remark}\label{proximateDdgrmk}
 Since the derived dg category forms a full subfunctor of the contraderived dg category, we can conclude along the lines of  \cite[\S 5.1]{lurieDAG10} that representability of $\Def_{\cP_{\infty}}^{\ctr}(\gamma,-)$ by a DGLA makes it the universal functor under  $\Def_{\cP_{\infty}}(\gamma,-)$ preserving homotopy pullbacks along small extensions, and the same is true if we replace  $\Def_{\cP_{\infty}}(\gamma,-)$ with the component of the trivial deformation. 
 
 Allowing $\cP$ and $\gamma$ to vary and $\cP$ to incorporate colours, Proposition \ref{ctrrightQprop} implies by the same reasoning that $\cD_{dg}^{\ctr,\ten,(\ten)}(-)$ is the universal such functor under either  $\cD_{dg}^{\ten,(\ten)}(-)$ or $ \cD_{dg}^{\ten,(\ten)}(k)\ten-$, and likewise for $\cQ^{\ctr}$ under $\cQ\hten-$. In other words, among constructions with well-behaved deformation theory, the contraderived dg category is the closest possible approximation to the derived dg category.
 \end{remark}

\bibliographystyle{alphanum}
\bibliography{references.bib}

\end{document}